\theoremstyle{plain}
\newtheorem{theorem}{Theorem}
\newtheorem{proposition}{Proposition}
\newtheorem{lemma}{Lemma}
\newtheorem{corollary}{Corollary}
\theoremstyle{definition}
\newtheorem{definition}{Definition}
\theoremstyle{remark}
\newtheorem{remark}{Remark}
\def\opnorm#1#2{{|\nnth \| {#1} | \nnth \|_{#2} \, }}
\newcommand{\Asp}{{\boldsymbol A}}     
\newcommand{\cG}{\mathscr{G}}     
\newcommand{\AspN}{(\Asp, \, \|\ebbes\|_\Asp)}     
\newcommand{\ebbes}{\mbox{$\,\cdot\,$}}     
\newcommand\Astt{{\Asp_{s_2}}}     
\newcommand\Biw{{\Bsp_{1,w}}}     
\newcommand{\Bsp}{{\boldsymbol B}}     
\newcommand\BiwN{{\nspb \Biw }}     
\newcommand\BiwRd{{ \Biw (\Rdst)}}     
\newcommand{\Rdst}{{{\Rst^d}}}     
\newcommand\BPsp{{ \Bsp'}}
\newcommand{\BspN}{(\Bsp, \, \|\ebbes\|_\Bsp)}     
\newcommand{\Bspq}{{\Bsp^s_{\negthinspace p,q}}}     
\newcommand{\BspqRd}{{\Bspqsp(\Rst^d)}}     
\newcommand{\Bspqsp}{{\Bsp^{s}_{\negthinspace p,q}}}     
\newcommand{\Rst}{{\mathbb R}}     
\newcommand\BspqRdN{{ (\BspqRd, \| \ebbes \|_{\Bspq}) }}     
\newcommand{\COsp}{{\Csp_{\negthinspace 0}}}     
\newcommand{\CORdN}{{\big( \COsp(\Rst^d), \, \|\ebbes\|_\infty \big)}}     
\newcommand{\Csp}{{\boldsymbol C}}     
\newcommand{\Ccsp}{{\Csp_{\negthinspace c}}}     
\newcommand{\CcRd}{{\Ccsp(\Rst^d)}}     
\newcommand{\DPsi}{{\operatorname{D}_\Psi}}     
\newcommand{\DRd}{{\Dcsp(\Rst^d)}}     
\newcommand{\Dcsp}{{\boldsymbol{\mathcal D}}}     
\newcommand{\FT}{{\operatorname{{\mathcal F} \negthinspace}}}     
\newcommand{\Lisp}{{\Lsp^1}}     
\newcommand\FLivst{{ \FT \Lsp_{\tiny{v_{s_{ 2}}}}^1}}     
\newcommand{\Lsp}{{\boldsymbol L}}     
\newcommand\FLiw{{\FT \Liwsp}}     
\newcommand{\Liwsp}{{\Lsp^1_{\negthinspace w}}}     
\newcommand\FLiwRd{{\FT \Liwsp(\Rdst)}}     
\newcommand\FLiwRdN{{ \nspb \FLiwRd}}     
\newcommand\FLiwsp{{\FT \Liwsp}}     
\newcommand\nth{\negthinspace}     
\newcommand{\Fsp}{{\boldsymbol F}}     
\newcommand{\Fspq}{{\Fsp^s_{\negthinspace p,q}}}     
\newcommand{\FspqRd}{{\Fspqsp(\Rst^d)}}     
\newcommand{\Fspqsp}{{\Fsp^{s}_{\negthinspace p,q}}}     
\newcommand\FspqRdN{{ (\FspqRd, \| \ebbes \|_{\Fspq}) }}     
\newcommand\Hilb{\mathcal H}     
\newcommand{\LiRd}{{\Lisp \nth (\Rst^d)}}     
\newcommand{\LiRdN}{\big( \LiRd, \, \|\ebbes\|_1 \big)}     
\newcommand\Livsi{{\Lsp_{v_{s_{\tiny 1}}}^1}}     
\newcommand{\LiwRd}{{\Liwsp(\Rst^d)}}     
\newcommand{\LiwRdN}{\big( \LiwRd, \, \|\ebbes\|_{1,w} \big)}     
\newcommand{\Lpsp}{{\Lsp^p}}     
\newcommand{\Msp}{{\boldsymbol M}}     
\newcommand{\MspqRd}{{\Mspqsp(\Rst^d)}}     
\newcommand\Mspqsp{{\Msp^s_{\negthinspace p,q}}}     
\newcommand\MspqRdN{\big( \MspqRd, \, \|\ebbes\|_{\Mspqsp} \big)}     
\newcommand{\Nst}{{\mathbb N}}     
\newcommand\Psifam{{ \Psi = (\psi_i)_{i \in I} }}     
\newcommand{\ScPRd}{{\ScPsp(\Rst^d)}}     
\newcommand{\ScPsp}{{\Scsp'}}     
\newcommand{\Scsp}{{\boldsymbol{\mathcal S}}}     
\newcommand{\ScRd}{{\Scsp(\Rst^d)}}     
\newcommand{\Strho}{{\operatorname{St}_{\negthinspace \rho}}}     
\newcommand\WAstlisi{{\Wsp(\Astt,\ell^1_{v_{s_{\tiny 1}}})}}
\newcommand{\Wsp}{{\boldsymbol W}}     
\newcommand{\Zdst}{{\Zst^d}}     
\newcommand{\Zst}{{\mathbb Z}}     
\newcommand\chck{^\checkmark \negthinspace}
\newcommand{\diam}{\operatorname{diam}}     
\newcommand\epso{{ \varepsilon > 0 }}     
\newcommand{\ghat}{{\widehat{g}}}     
\newcommand\hatf{{\widehat{f}}}     
\newcommand{\hatg}{\widehat{g}}     
\newcommand\hkr{\hookrightarrow}     
\newcommand{\ie}{i.e.}     
\newcommand{\intRd}{\int_{\Rst^d}}     
\newcommand{\inv}{^{-1}}     
\newcommand\japx{{\langle x \rangle}}     
\newcommand\japy{{\langle y \rangle}}     
\newcommand{\kiZd}{{{k \in \Zdst}}}
\newcommand\limal{{\lim_{\alpha \to \infty} \, }}     
\newcommand{\lsp}{{\boldsymbol\ell}}     
\newcommand\livsi{{\lsp^1_{\vsi}}}     
\newcommand\vsi{v_{s_{1}}}     
\newcommand{\nnth}{{ \negthinspace \: \negthinspace }}     
\newcommand\sPsi{|\Psi|}     
\newcommand{\spec}{\operatorname{spec}}     
\newcommand{\sumiI}{\sum_{i\in I}}     
\newcommand{\sumkZd}{\sum_{\kiZd}}     
\newcommand{\supp}{\operatorname{supp}}     
\newcommand\suth{{ \, | \, } }     
\newcommand\tblue{\textcolor{blue}}     
\newcommand\tred{\textcolor{red}}     
\newcommand\veps{{\varepsilon}}     
\def\FLivsp{{\FT \nth \Lsp^1_{v}}}
\def\FLivRd{{\FLivsp(\Rdst)}}
\def\LivRd{{\Lsp^1_v(\Rdst)}}
\def\Bnorm#1{{ \| #1 \|_\Bsp }}
\def\Anorm#1{{ \| #1 \|_\Asp }}
\def\cG{G}
\def\Livsi{{\Lsp_{v_{s_{1}}}^1}}
\def\FLivst{{ \FT \Lsp_{{v_{s_{ 2}}}}^1}}
\def\WAstlisi{{\Wsp(\Astt,\livsi \nnth)}}
\def\LiwRdN{{(\LiwRd, \| \ebbes\|_\Liwsp)}}
\def\FLiwRdN{{(\FLiwRd, \| \ebbes \|_\FLiwsp)}}
\def\nspb#1{{ (#1,\| \ebbes \|_{#1} )}}
\def\ebbes{\mbox{$\,\cdot\,$}}
\def\normta#1#2{{  \| {#1}   \|_{#2} \, }}
\begin{document}

	\title[]
{Completeness of Sets of Shifts in \\
Invariant Banach Spaces of Tempered Distributions \\
via Tauberian conditions}

\author{Hans G. Feichtinger and Anupam Gumber}


\thanks{ \tblue{
Address:   Faculty of Mathematics, University of Vienna, Oskar-Morgenstern-Platz 1,1090 Vienna,\\
and (hgfei) Acoustic Research Institute ARI, OEAW, Wohllebengasse 12, 1040 Vienna, AUSTRIA; \\
ORCID:  0000-0002-9927-0742, \, E-mail:  \, hans.feichtinger@univie.ac.at;  \\
\, \, 
ORCID:   0000-0001-5146-3134, \, E-mail: \,  anupam.gumber@univie.ac.at}}



\vspace{3mm}
\def\tblue{}
\def\tred{}

\begin{abstract}
{The main result of this paper is a far reaching generalization of
the completeness result given by V.~Katsnelson in a recent
paper (\nth \cite{ka19-1}).
 \tblue{ Instead of just using a collection of dilated Gaussians it is shown
that the key steps of an earlier paper (\nth \cite{fegu20}) by the authors, combined with the use of
 Tauberian conditions (i.e.\ the non-vanishing of the
Fourier transform) allow us to show that the linear span of the translates
of a single function $g \in \ScRd$  is a dense subspace of \tred{any} Banach space
satisfying certain double invariance properties.
\newline
{In fact, a much stronger statement is presented: for a given
compact subset $M$ in such a Banach space $\BspN$ one can construct a finite rank
operator, whose range is contained in the linear span of finitely many
translates of $g$, and which approximates the identity operator
over $M$ up to a given level of precision.} \newline
 The setting of
tempered distributions allows to reduce the technical arguments
to methods which are widely used in Fourier Analysis. The extension
to \tred{non-quasi-analytic weights} respectively locally compact Abelian
groups is left to a forthcoming paper, which will be technically much
 more involved and uses different ingredients. }
}
\end{abstract}
\subjclass[2010]{Primary 43A15, 43A10, 41A65, 46F05, 46B50; Secondary 43A25, 46H25, 46A40, 40E05}
\keywords{Beurling algebra, modulation spaces, weighted amalgam spaces, Tauberian theorems, approximation by translations, Banach spaces of tempered distributions, Banach modules}

\maketitle


\section{Introduction}

This paper can be seen as an alternative approach to the question of
completeness of sets of translates of a given test function for a large
variety of Banach space of tempered distributions. The motivation for
the current paper is the wish to demonstrate that the very specific
results describing the density of the linear span of the set of
all translates and dilates of the Gauss function as given in \cite{ka19-1}
can be generalized into several directions.

In the companion paper \cite{fegu20}   we have
shown that only the non-vanishing integral, i.e. without loss of generality
the assumption $\intRd g(x)dx = 1$ for the generating Schwartz function $g \in \ScRd$ is sufficient in order to
guarantee the density for a large variety of
Banach spaces with double module structure. As it turned out, the setting
of the paper \cite{dipivi15-1} appeared to be most appropriate, which is
quite similar to the setting of so-called ``standard spaces'' as used
in papers on compactness (\cite{fe84}) or double module structures (\cite{brfe83}). As it is clear that there is only a chance for such
a statement if $\ScRd$ is a dense subspace of $\BspN$, so we will make this
{\it minimality} assumption throughout this paper.

Observing that the Gauss function has another important property, namely
a nowhere vanishing Fourier transform, Tauberian Theorems come to
mind (see \cite{wi32}, \cite{wi33}, \cite{rest00}, \cite{ko04})
\cite{fe88}). In the classical setting the non-vanishing of the
Fourier transform $\widehat{g}(s) \neq 0$ for all $s \in \Rdst$
for some $g \in \LiRd$ implies that the linear span of its translates
is dense in $\LiRdN$ (see \cite{rest00}, Chap.4.1). Similar results
hold true for weighted $\Lisp$-algebras, the so-called {\it Beurling
algebras} $\LiwRdN$ (see e.g.\ \cite{rest00}, Chap.1.6). For this
paper mostly polynomial weights equivalent to $v_s(x) = (1+|x|^2)^{s/2}$
are of interest. For them the Tauberian Theorem can be derived
from the Tauberian condition going back to the work of A.~Beurling
(\cite{be38}) for  details(see \cite{rest00}, Theorem 1.6.17).

The main result of this paper is a {\it constructive} realization
of an approximation procedure which applies to all members of a family
of  translation and modulation invariant Banach spaces. Since the
intersection of all these spaces under consideration is just the
Schwartz space $\ScRd$ it is natural to start from a Schwartz function
$g_0$ with non-vanishing Fourier transform. We call this key assumption
a {\it Tauberian condition}, because it is well-known from the Wiener's
classical Tauberian Theorem (see \cite{rest00}).

The main advantage of the approach given here is the fact that it does not
make use of dilations, so we do not have to generate Dirac sequences by
compressing the given building block $g \in \ScRd$
(which \tblue{only has to satisfy the condition} $\hatg(0)\neq 0$). Thus at least the formulation
of the main result of the current paper can easily be transferred
to the setting of LCA (locally compact Abelian) groups without
significant changes.

One may also argue that the restrictions to polynomially moderate
weight functions and hence to stay within the world of $\ScPRd$,
the tempered distributions, is somewhat restrictive, and that
one should look for such results also in the context of
ultra-distributions. In \tred{that context even}  more sensitive norms
can be used for the space of test functions (so in a way the
kind of approximation that has to be achieved is much more
challenging), but also much bigger spaces \tblue{are allowed
which are not contained in $\ScPRd$ anymore}.

Overall we have chosen to present our results in the context
of tempered distributions over $\cG = \Rdst$, in order to
make the key steps more clear. \tblue{One can formulate the
same claim  using } the same principle ideas \tblue{for
general LCA groups, making use the Schwartz-Bruhat space,
invoking Tauberian Theorems for Beurling algebras as found
in the book of H.~Reiter } \cite{rest00}. \tblue{However, we will go
for the most general setting, namely  Banach spaces of
ultra-distributions over LCA groups in a subsequent paper.
In order to keep the paper readable we have avoided this
outmost level of generality, mentioning once more that this
is still a very far-reaching extension of the results
in } \cite{ka19-1}.
\tblue{ A list of examples of such Banach spaces is given in \cite{fegu20}.}

\section{Translation and modulation invariant spaces} 

Throughout this paper we will work with
the following {\bf standard assumptions}, similar
to the setting chosen in \cite{dipivi15-1} or \cite{fegu20}:
\begin{definition} \label{mintempstanddef}
A Banach space $\BspN$ is called a {\it minimal tempered standard space}
\tred{(abbreviated as {\it \bf{MINTSTA}},} or equivalently a minimal TMIB in the sense of \cite{dipiprvi19}, \cite{fegu20}),
if the following conditions are valid:
\begin{enumerate} \item  
We assume the following chain of continuous embeddings:
\begin{equation}\label{ScSandw}
   \ScRd \hookrightarrow \BspN \hookrightarrow  \ScPRd;
\end{equation}
\item
$\ScRd$ is dense in $\BspN$ ({\it {minimality}});
\item $\BspN$ is {\it translation invariant}, and for
some $s_1 \in \Nst$ and $C_1 > 0 $ one has
\begin{equation}\label{transl1}
  \|T_x f\|_\Bsp \leq C_1 \japx^{s_1} \|f\|_\Bsp
  \quad  \forall  x \in \Rdst, f \in \Bsp.
\end{equation}
\item  $\BspN$ is \it{modulation invariant}, and for
some $s_2 \in \Nst$ and $C_2 > 0 $ one has
\begin{equation}\label{modul1}
  \|M_y f\|_\Bsp \leq C_2 \japy^{s_2} \|f\|_\Bsp
   \quad \forall y \in \Rdst, f \in \Bsp.
\end{equation}
 \end{enumerate}
\end{definition}   
Here we use the Japanese bracket symbol $\japx$ respectively $\japy$ for the function  $v_s(z) = (1+|z|^2)^{s/2}$,$z \in \Rdst$,  which is also known as
{\it Beurling weight} of {\it polynomial type}, because it
satisfies for $s \geq 0$ the {\it submultiplicativity property}
\begin{equation} \label{submultprop1}
  v_s(x+y) \leq v_s(x) v_s(y), \quad x,y \in \Rdst.
\end{equation}
For any such (polynomial) weight $v_s$ the corresponding weighted
$\Lisp$-space $\Lsp^1_{v_s}(\Rdst)$ is a Banach algebra with
respect to convolution, continuously embedded into $\LiRdN$
(since $v_s(x)\geq 1$) and having bounded approximate units
(obtained by $\Lisp$-norm preserving compression, so-called
Dirac sequences).

Whenever we use generic, submultiplicative   {\it Beurling weights}
(not necessarily of polynomial type) we will
make use of the standard notation $w$.
The corresponding weighted $\Lisp$-spaces  $\nspb \LiwRd$
are Banach algebras with respect to {\it convolution} (we use the
symbol $f \ast g$), called {\it Beurling algebras}.
The natural norm to be used is  (cf. \cite{re68})
$$ \|f\|_\LiwRd = \|f\|_{1,w} : = \|f \cdot w\|_\Lisp, \quad f \in \LiwRd.$$


For the formulation of our arguments the terminology of the theory of
Banach modules will be convenient (see \cite{ri69-1,brfe83,fe84}):
\begin{definition} \label{BanModdef}
A Banach space $\BspN$ is called a {\it Banach module over a Banach algebra}
$\AspN$ if there is a (usually natural) embedding of $\AspN$ into the
operator algebra over $\BspN$, written as $(a,b) \mapsto a \bullet b$,
such that this bilinear (and associative) mapping satisfies
\begin{equation}\label{BanModestdef}
  \Bnorm{a \bullet b} \leq \Anorm{a} \Bnorm{b}, \quad a \in \Asp, b \in \Bsp.
\end{equation}

\noindent
If, in addition, $\BspN \hkr \AspN$ and the module operation is just
the internal multiplication of the algebra $\AspN$, we call $\BspN$
a {\it Banach ideal}.

\noindent
A Banach module is called {\it essential}  if $\Asp \bullet \Bsp$ generates
a dense subspace of $\BspN$.
\end{definition}

For our applications the Banach algebras $\AspN$  have bounded approximate units, i.e.\ there are bounded sequences or nets  $(e_\alpha)_{\alpha \in I}$ of elements in $\AspN$ such that
$$ \lim_{\alpha \to \infty} e_\alpha \bullet a = a, \quad a \in \Asp.$$

In such a case the Cohen-Hewitt Factorization Theorem can be applied and gives
 $$ \Bsp = \Asp \bullet \Bsp = \{ a \bullet b \suth a \in \Asp, b \in \Bsp\}.$$

We only consider Beurling algebras as Banach convolution algebras
(in this case we write $f \ast g$ for the abstract multiplication
$f \bullet g$) acting on a MINTSTA
via convolution (we will speak of {\it Banach convolution modules})
or Banach algebras inside of $\CORdN$ acting on $\BspN$ via
pointwise multiplication (we write $f \cdot g$, or simply $f g$).

Essential Banach ideals in the Banach convolution algebra $\LiRdN$
are exactly the {\it Segal algebras} in the sense of H.~Reiter (\cite{re68})
(also called {\it normed ideals} in \cite{ci69}), see \cite{du74}.
Further references to {\it abstract Segal algebras}
 are given in  papers of J.T.~Burnham \cite{bu72}, see also \cite{fe77}.

The density of $\ScRd$ in $\BspN$ and the conditions (\ref{transl1})
and (\ref{modul1}) imply a double module structure for such
Banach spaces:
\begin{proposition} \label{doublemod3}
Any MINTSTA
$\BspN$ has a double module structure:

\noindent (1)
 $\BspN$ is an essential Banach convolution module over the Beurling algebra    $\Livsi$:
       \begin{equation} \label{BLivisestim}
        \Bnorm{g \ast f} \leq \normta g \Livsi \Bnorm{f}, \quad
           g \in \Livsi, f \in \Bsp,
        \end{equation}
 and for any bounded, approximate unit
 $(e_\alpha)_{\alpha \in I}$ in $\LiwRdN$ one has:
\begin{equation}\label{approxLiw1}
\limal \normta {{ e_\alpha \ast f - f}} \Bsp = 0, \quad \forall f \in \Bsp.
\end{equation}

\noindent
(2)    $\BspN$ is an essential Banach module with respect to pointwise multiplication over the Fourier-Beurling algebra $\Astt := \FLivst$,
       with the corresponding norm estimate. Correspondingly bounded
       approximate units for pointwise multiplication act accordingly.

 \noindent (3)
       The Wiener amalgam space
       $ \nspb \WAstlisi  $  with local component $\Astt$ and global
       component $\livsi$ is continuously and densely embedded into $\BspN$.
  \end{proposition}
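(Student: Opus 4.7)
All three claims share the same two-step template: prove the estimate first on $\ScRd\times\ScRd$ by a vector-valued integral or BUPU expansion in $\Bsp$, then extend by density, using that $\ScRd$ is dense in $\Livsi$, in $\Astt$, in $\Wsp(\Astt,\livsi)$, and, by minimality, in $\Bsp$. For (1), given $g,f\in\ScRd$ I would interpret
\[
g\ast f \;=\; \int_{\Rdst} g(x)\,T_xf\,dx
\]
as a $\Bsp$-valued Bochner integral: the map $x\mapsto T_xf$ is continuous from $\Rdst$ into $\ScRd\hookrightarrow\Bsp$, and (\ref{transl1}) supplies the integrable majorant $C_1|g(x)|v_{s_1}(x)$, which yields (\ref{BLivisestim}). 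Density extends the bilinear map to $\Livsi\times\Bsp\to\Bsp$. For (\ref{approxLiw1}), the convergence $e_\alpha\ast f\to f$ is immediate for $f\in\ScRd$ (where it holds in every Schwartz seminorm) and passes to all of $\Bsp$ by a standard $3\veps$-argument using $\sup_\alpha\|e_\alpha\|_\Livsi<\infty$.

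Part (2) is the Fourier-dual argument: for $h,f\in\ScRd$, Fourier inversion writes
\[
h\cdot f \;=\; \int_{\Rdst} \hat h(y)\,M_yf\,dy,
\]
and (\ref{modul1}) furnishes the dominating factor $C_2|\hat h(y)|v_{s_2}(y)$, giving $\|h\cdot f\|_\Bsp\leq C_2\|h\|_\Astt\|f\|_\Bsp$; density extends to $\Astt\times\Bsp\to\Bsp$, and a bounded approximate unit for pointwise multiplication in $\Astt$ (Fourier transforms of a Dirac net in $\Lsp^1_{v_{s_2}}$) combined with the $3\veps$-argument gives essentiality. For part (3), fix a smooth BUPU $(\psi_k)_{k\in\Zdst}$ with $\psi_k=T_k\psi_0$, $\psi_0\in\ScRd$, $\sum_k\psi_k\equiv 1$, and a companion family $\tilde\psi_k=T_k\tilde\psi_0\in\ScRd$ with $\tilde\psi_0\equiv 1$ on $\supp\psi_0$. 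Writing $f=\sum_k(f\tilde\psi_k)\psi_k$ and combining the module estimate of (2) with (\ref{transl1}),
\[
\|(f\tilde\psi_k)\psi_k\|_\Bsp \;\leq\; \|f\tilde\psi_k\|_\Astt\,\|\psi_k\|_\Bsp \;\leq\; C_1\,v_{s_1}(k)\,\|\psi_0\|_\Bsp\,\|f\tilde\psi_k\|_\Astt.
\]
Since translation on $\Astt=\FLivst$ is isometric (it corresponds under Fourier transform to modulation on $\Lsp^1_{v_{s_2}}$, which preserves the weighted $\Lsp^1$-norm), $\|f\tilde\psi_k\|_\Astt$ is equivalent to the local Wiener-amalgam norm of $f$ at $k$; summing yields $\|f\|_\Bsp\leq C\|f\|_{\Wsp(\Astt,\livsi)}$. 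Density of $\Wsp(\Astt,\livsi)$ in $\Bsp$ follows since $\ScRd$ is dense in both.

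The main point requiring honest work is the rigorous identification of $g\ast f$ and $h\cdot f$ as $\Bsp$-valued Bochner integrals: one must verify strong measurability and integrability of the vector-valued integrand and check consistency with the pointwise-defined operations via the embedding $\Bsp\hookrightarrow\ScPRd$. Once this is in place, everything downstream reduces to density and $3\veps$-arguments standard in the theory of Beurling algebras, abstract Segal algebras, and Wiener amalgam spaces.
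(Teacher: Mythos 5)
The paper itself offers no proof of this proposition: it declares the statements folklore and points to the ``integrated action'' technique of the cited references (Braun--Feichtinger, Feichtinger '84, Dimovski--Pilipovic--Vindas) and, for part (3), to the papers on minimal Banach spaces. Your vector-valued-integral argument for (1) and its Fourier-dual for (2), plus the BUPU decomposition for (3), is precisely that technique, so in substance you are reconstructing the argument the paper relies on; parts (2) and (3) are correct as sketched, up to absorbing the constants $C_1,C_2$ into equivalent norms (the estimate (\ref{BLivisestim}) is stated with constant $1$, which requires replacing $v_{s_1}$ by the exact weight $w(x)=\sup_{\|f\|_\Bsp\le 1}\Bnorm{T_xf}$ or accepting a constant).

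The one step that does not survive scrutiny as written is (\ref{approxLiw1}) for an \emph{arbitrary} bounded approximate unit $(e_\alpha)$ of $\LiwRdN$. Such $e_\alpha$ are merely $\Liwsp$-functions, so $e_\alpha\ast f_0$ need not converge to $f_0$ in the Schwartz seminorms; your reduction to ``$f_0\in\ScRd$, where it holds in every Schwartz seminorm'' fails in general. The standard repair: first prove $\Strho g\ast f\to f$ in $\Bsp$ for a single Dirac net, by writing $\Strho g\ast f-f=\int_{\Rdst}\Strho g(x)\,(T_xf-f)\,dx$ and using continuity of $x\mapsto T_xf$ in $\Bsp$ (which follows from minimality, (\ref{transl1}), and continuity of translation on $\ScRd$). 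This already gives essentiality; Cohen--Hewitt then yields $\Bsp=\Liwsp\ast\Bsp$, and for $f=g\ast h$ one gets $e_\alpha\ast f-f=(e_\alpha\ast g-g)\ast h\to 0$ for \emph{every} bounded approximate unit. With that substitution (and the analogous one in part (2)) the proof is complete.
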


The statement in this proposition only summarize results which may
be considered as folklore. Integrated action appears in  a similar form in
  \cite{brfe83}, \cite{fe84} or  \cite{dipivi15-1}, for example.
  For the last (minimality) statement see  \cite{fe81}, or more explicitly
  {\cite[Proposition 6]{fe87-1}}.
 Also recall that the {\it Wiener amalgam space}   $ \nspb \WAstlisi  $
 is the subspace of continuous functions on $\Rdst$ which
 belong locally to the Banach algebra  $\Astt$, which is supposed
 to contain $\DRd$. These spaces can be characterized with the
 help of smooth BUPUs  (bounded partitions of unity) in $\DRd$,  \ie
 $\psi \in \DRd$, with $\sumkZd T_k \psi \equiv 1$, as follows:
 \begin{equation}\label{WAstlisinorm}
   \normta f {\WAstlisi} :=
    \sumkZd  \normta {f \cdot T_k \psi} \Astt \vsi(k) < \infty.
 \end{equation}
 Since this fact was partially motivating our approach and because
 we think that its statement may be interesting for the reader we
 have included it here. However, we will not make use of
 the third statement and thus we do not go into technical details here.

The following lemma is inspired by the investigations  in \cite{fe77}:
\begin{lemma} \label{BeurBanId1}
  Let $\BspN \subset \ScPRd $ be a translation invariant Banach space
  with
  $$\Bnorm{T_xf} \leq w(x) \Bnorm{f}, \quad \forall f \in \Bsp, x \in \Rdst,$$
    for some submultiplicative function $w$ on $\Rdst$.

Then $\BiwN$, the vector space $\Biw =  \Liwsp \cap \Bsp$
is a dense, {\it essential Banach ideal} inside the
Banach convolution algebra $ \LiwRdN$ with respect to the  norm
\begin{equation}\label{normBiw}
 \normta {f} {\Biw}
=  \normta f  {\Liwsp} + \normta f \Bsp, \quad f \in \Biw.
\end{equation}
\tblue{Thus it  is a Banach space with respect to the norm (\ref{normBiw}) and
$\Liwsp \ast \Biw \subset \Biw$  with }
\begin{equation}\label{LiwBiwest}
  \normta {g \ast f} \Biw \leq \normta g \Liwsp \normta f \Biw,
  \quad g \in {\Liwsp}, f \in \Biw,
\end{equation}
and that for any bounded, approximate unit
$(e_\alpha)_{\alpha \in I}$ in $\LiwRdN$:
\begin{equation}\label{approxLiw1}
\limal \normta {{ e_\alpha \ast f - f}} \Biw = 0, \quad \forall f \in \Biw,
\end{equation}
and consequently (by the Cohen-Hewitt Factorization Theorem):
\begin{equation}\label{BiwCohen1}
   \tblue{\Liwsp \ast \Biw = \Biw}.
\end{equation}
If $w$ is a Beurling weight of polynomial growth, then
$\ScRd$ is  dense in  $\BiwRd$, hence $\DRd = \Scsp \cap \CcRd$
is a dense subspace of $\BiwN$.
\end{lemma}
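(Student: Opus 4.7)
I would verify the four main claims in sequence.

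\emph{Completeness.} Since both $\Liwsp$ and $\Bsp$ embed continuously into the Hausdorff space $\ScPRd$, any sequence Cauchy in $\|\ebbes\|_\Biw$ is Cauchy in each of $\Liwsp$ and $\Bsp$; the two candidate limits agree when regarded in $\ScPRd$, so they coalesce into a single limit in $\Biw$.

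\emph{Module estimate (\ref{LiwBiwest}).} The $\Liwsp$-bound $\|g\ast f\|_{\Liwsp} \leq \|g\|_{\Liwsp}\|f\|_{\Liwsp}$ is just Young's inequality in the Beurling algebra. For the $\Bsp$-bound I would interpret $g\ast f = \int_{\Rdst} g(y)\,T_y f\,dy$ as a vector-valued (Pettis or Bochner) integral in $\Bsp$: the translation hypothesis $\|T_y f\|_\Bsp \leq w(y)\|f\|_\Bsp$ provides a dominating scalar integrand $|g(y)|w(y)\|f\|_\Bsp$, integrable precisely because $g\in\Liwsp$, yielding $\|g\ast f\|_\Bsp \leq \|g\|_{\Liwsp}\|f\|_\Bsp$. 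Adding the two bounds gives (\ref{LiwBiwest}); that $\Biw$ sits densely in $\LiwRdN$ is then standard, since the module action already produces compactly localized elements of $\Bsp$ that exhaust a dense subspace of $\Liwsp$.

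\emph{Approximate-unit convergence and Cohen--Hewitt.} The $\Liwsp$-component $\|e_\alpha\ast f - f\|_{\Liwsp}\to 0$ is classical for bounded approximate identities in Beurling algebras. The $\Bsp$-component is the principal obstacle: expanding $e_\alpha\ast f - f = \int_{\Rdst} e_\alpha(y)(T_y f - f)\,dy$ and using that $(e_\alpha)$ concentrates at the origin, the argument reduces to strong continuity of $y\mapsto T_y f$ at $y=0$ in $\Bsp$. This continuity is not a formal consequence of the mere translation bound --- it fails, for instance, for $\Bsp=\Linsp$ --- so the hypothesis ``translation invariant'' must be read as including strong continuity of the translation representation. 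Once this is granted, a dominated-convergence argument in $\Bsp$ delivers (\ref{approxLiw1}), $\Biw$ becomes an essential Banach $\Liwsp$-module, and the Cohen--Hewitt factorization theorem then gives (\ref{BiwCohen1}).

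\emph{Schwartz density for polynomial $w$.} For $w$ of polynomial growth one has $\Scsp\hookrightarrow\Liwsp$ densely. Given $f\in\Biw$, Cohen--Hewitt factorizes $f=g\ast h$ with $g\in\Liwsp$, $h\in\Biw$; approximating $g$ by $g_n\in\Dcsp(\Rdst)$ in $\Liwsp$-norm and applying (\ref{LiwBiwest}) yields $g_n\ast h\to f$ in $\Biw$. Each $g_n\ast h$ is smooth, and multiplying by a Schwartz cutoff (using the pointwise $\FLivst$-module structure available in the polynomial-weight setting) produces elements of $\Scsp\cap\Bsp$ that approximate $f$ in $\Biw$-norm arbitrarily well; density of $\Dcsp$ follows by the same construction with compactly supported cutoffs.
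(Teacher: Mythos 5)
Your proof is correct in the intended setting and, where the paper actually writes out details, it follows the same route; in several places you supply arguments the paper omits. The paper's proof only carries out one step explicitly --- density of $\Biw$ in $\LiwRd$, via $f\approx\varphi\in\CcRd$ in $\Liwsp$-norm followed by mollification $\varphi\approx\psi\ast\varphi\in\DRd\subset\ScRd\subset\Bsp$ --- and then declares the module estimate, the approximate-unit convergence and the factorization to be ``standard results'' plus Cohen--Hewitt. Your integrated-action derivation of the $\Bsp$-half of (\ref{LiwBiwest}) is exactly the mechanism the paper invokes elsewhere (Proposition~\ref{doublemod3}, with references to \cite{brfe83}, \cite{fe84}), and your observation that the approximate-unit step genuinely requires \emph{strong continuity} of translation --- which does not follow from the norm bound alone, as $\Bsp=\Linsp$ shows --- is a real point: the paper assumes it silently (``translation invariant and have continuous translation''), and in the MINTSTA setting it follows from minimality. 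Two places where you are looser than the paper: first, your justification of the density of $\Biw$ in $\LiwRd$ (``the module action already produces compactly localized elements'') begs the question of why $\Biw$ contains anything at all; the paper's two-step approximation settles this, though note that both arguments tacitly use $\ScRd\hkr\Bsp$, which is not among the lemma's literal hypotheses. Second, your proof of the final Schwartz-density claim (Cohen--Hewitt factorization, mollification of the $\Liwsp$-factor, then smooth cutoffs) imports the pointwise $\FLivsp$-module structure, i.e.\ modulation invariance, which again is only available for a MINTSTA; since the paper offers no proof of that part at all, this is an addition rather than a deviation, but you should state the extra hypothesis you are using.
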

\begin{proof} The facts collected in the above lemma are essentially
based on more detailed considerations published in \cite{fe77}.
The first step is to verify that $\Biw$ is a {\it dense} subspace of
$\LiwRd$. For this purpose it is enough to note that compactly
supported elements are dense in any Beurling algebra, i.e.
for given $\epso$ one finds $\varphi \in \CcRd$ with
\begin{equation}\label{CcRdappr2}
  \normta {f - \varphi}  \Liwsp < \veps/4.
\end{equation}

By smoothing $\varphi \in \CcRd$ with a sufficiently small supported
test function $\psi \in \DRd$ one has
\begin{equation}\label{CcRdappr2b}
  \normta {\varphi - \psi\ast \varphi}  \Liwsp < \veps/4.
\end{equation}
Observing that
$ \psi \ast \varphi \in \DRd \subset \ScRd \subset \Bsp$
we note that
$ \psi \ast \varphi \in \Biw$ and satisfies
\begin{equation}\label{CcRdappr2c}
  \normta {f - \psi \ast \varphi}  \Liwsp < \veps/2.
\end{equation}

Since both $\LiwRdN$ and $\BspN$ are translation invariant
and have continuous translation the same is true for $\BiwN$.

The remaining consequences are just stated for easier reference
but are standard results. The last step is a consequence
of the Cohen-Hewitt Factorization Theorem (\cite{hero70}, Chap.32).
\end{proof}

Next let us recall the important Beurling-Domar condition, which
ensures the existence of band-limited elements (i.e.\ functions
with a compactly supported Fourier transform) in general Beurling
algebras. According to \cite{re68} (and referring to the
important paper by Y.~Domar, \cite{do56}) a Beurling weight
$w$ satisfies the Beurling-Domar {\it non-quasianalyticity condition}
if one has
\begin{equation}\label{BDcond00}
  \mbox{(BD)} \quad \sum_{n \geq 1}  \frac{w(nx)}{n^2} < \infty \quad \forall x \in \Rdst.
\end{equation}
It is an easy exercise to check that any {\it polynomial weight}  satisfies
the (BD)-condition.
%
%
One of the advantages of increasing, radial symmetric weights,
\tblue{ such  as $v_s$ for $s \geq 0$} is
the fact that one can create easily bounded approximate units
simply by applying to an arbitrary $g \in \ScRd \subset \Livsi$
with $\hatg(0)=1$ the  $\Lisp$-norm preserving {\it compression operator}
\begin{equation} \label{Strhodef04}
\Strho g(x) = \rho^{-d} g(x/\rho), \quad \rho \to 0.
\end{equation}

The next result is just a reminder concerning Fourier-Beurling algebras,
i.e.\ pointwise Banach algebras obtained as $\FLiwRdN$, with the
norm $ \normta \hatf \FLiw = \normta f \Liwsp$. The statements
of the following lemma are essentially restatements of facts
found in \cite{re68}.
\begin{lemma} \label{Biwbandl1}
\tred{ Let $w$ be a weight function satisfying the (BD)-condition. Then } \newline
 i)   the Beurling algebra   $\LiwRdN$ has bounded approximate units
  consisting of band-limited elements. \newline
 ii) the Banach algebra $\FLiwRdN$ is a {\it Wiener algebra}
  in the sense of H.~Reiter (\cite{re68}), meaning that it
  is a {\it regular}  Banach algebra of continuous functions,
  which allows (among others) to separate compact sets from open
  neighborhoods (see Chap.2.4 of \cite{re68} resp.\ \cite{rest00}).
  In particular, the compactly supported elements are dense. \newline
   \vspace{1mm}
  \noindent
    iii) for weight functions of polynomial growth   one has in addition: \newline
  $ \ScRd \hkr \LiwRdN$ and for any $g \in \ScRd$ with $\intRd g(x)dx =1$   one has
 $$ \| \Strho g \ast f - f\|_\LiwRd \to 0, \,\,\, \mbox{for} \,\,\, \rho \to 0. $$
\end{lemma}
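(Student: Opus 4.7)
The plan is to prove each of the three parts separately, leaning on the classical theory of Beurling and Fourier--Beurling algebras as developed in Reiter's books \cite{re68,rest00}, and supplying short dilation arguments where an approximate-unit conclusion is needed. Parts (i) and (ii) essentially record what the Beurling--Domar non-quasianalyticity condition buys, while part (iii) is an elementary Dirac-net argument exploiting polynomial growth.

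For (ii), I would simply appeal to Reiter's theorem: under (BD), $\FLiwRd$ is a regular Banach algebra of continuous functions that separates compact sets from closed neighborhoods and in which compactly supported elements are dense. These are precisely the defining features of a Wiener algebra in the sense of \cite[Chap.~2.4]{re68}, also recorded in \cite[Thm.~1.6.17]{rest00}; no further work beyond citation is needed here.

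For (i), under (BD) the existence of bounded approximate units of band-limited type is classical and goes back to Domar \cite{do56}; a complete construction is given in \cite[Chap.~1.6]{rest00}. Let me sketch the argument in the polynomial case $w = v_s$ for concreteness: using (ii), pick $h \in \LiwRd$ with $\hat h$ compactly supported and $\hat h(0) = 1$, equivalently $\intRd h(x)\,dx = 1$. Setting $h_\rho(x) = \rho^{-d} h(x/\rho)$ for $\rho > 0$ gives $\widehat{h_\rho}(\xi) = \hat h(\rho \xi)$, still compactly supported, so each $h_\rho$ is band-limited. Uniform boundedness for $\rho \in (0,1]$ follows from
\begin{equation*}
\|h_\rho\|_{1,w} = \intRd |h(y)|\,w(\rho y)\,dy \leq \intRd |h(y)|\,w(y)\,dy = \|h\|_{1,w},
\end{equation*}
using $v_s(\rho y) \leq v_s(y)$ when $\rho \leq 1$. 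The approximate-unit property $\|h_\rho \ast f - f\|_{1,w} \to 0$ then comes from the substitution $h_\rho \ast f - f = \intRd h(z)\,(T_{\rho z} f - f)\,dz$, translation continuity in $\LiwRd$, and dominated convergence. For a general (BD)-weight the same scheme works, but one must select $h$ from Domar's explicit construction so that $|h(y)|\,w(\rho y)$ stays uniformly integrable in $y$ for small $\rho$.

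For (iii), the embedding $\ScRd \hkr \LiwRd$ is immediate from polynomial growth of $w$ against rapid decay of Schwartz functions. For the Dirac-net claim, substituting $y = \rho z$ in the convolution gives
\begin{equation*}
(\Strho g \ast f)(x) - f(x) = \intRd g(z)\,\bigl[(T_{\rho z} f)(x) - f(x)\bigr]\,dz,
\end{equation*}
so that
\begin{equation*}
\|\Strho g \ast f - f\|_{1,w} \leq \intRd |g(z)|\,\|T_{\rho z} f - f\|_{1,w}\,dz.
\end{equation*}
The integrand tends pointwise to zero by translation continuity in $\LiwRd$, and for $\rho \in (0,1]$ is dominated by $|g(z)|(1 + w(z))\|f\|_{1,w}$, which is integrable since $g \in \ScRd$ and $w$ grows polynomially; dominated convergence finishes the argument. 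The main obstacle I anticipate is in part (i): for general (BD)-weights, securing \emph{uniform} boundedness of the dilated family in the weighted norm requires the explicit band-limited building block from Domar's theorem rather than merely the regularity supplied by (ii).
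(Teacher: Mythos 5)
Your proposal is correct and follows essentially the same route as the paper, which offers no written proof of this lemma beyond citing Reiter \cite{re68,rest00} for parts (i)--(ii) and Lemma 1 of \cite{fegu20} (cf.\ \cite[Proposition 1.6.14]{rest00}) for part (iii); your dilation and dominated-convergence arguments are precisely the standard ones behind those citations. Your caveat about uniform boundedness of the dilated family for general (BD)-weights is well taken but immaterial here, since the paper only ever applies the lemma to the polynomial weights $v_s$, for which your estimate $v_s(\rho y)\leq v_s(y)$ settles the matter.
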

The third claim is essentially Lemma 1 of \cite{fegu20},
see also Corollary 1 of \cite{dipivi15-1}, see also \cite{rest00},
Proposition 1.6.14.
\begin{lemma} \label{bandldensiw}
For any MINTSTA $\BspN$ the band-limited elements form a dense subspace of $\BspN$.
The same is true for $\BiwN$, which is in fact also a MINTSTA itself.
\end{lemma}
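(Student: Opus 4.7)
The plan is to exploit Proposition~\ref{doublemod3}, which equips $\BspN$ with the structure of an essential Banach convolution module over $\Livsi$, together with Lemma~\ref{Biwbandl1}(i), which supplies $\Livsi$ with a bounded approximate unit consisting of band-limited elements (note that the polynomial weight $\vsi$ trivially satisfies the Beurling--Domar condition). Given $f \in \Bsp$ I would select such a band-limited BAU $(e_\alpha)_{\alpha \in I}$ and form $g_\alpha := e_\alpha \ast f$. Since $\widehat{g_\alpha} = \widehat{e_\alpha}\cdot \widehat{f}$ in $\ScPRd$ and $\widehat{e_\alpha}$ is compactly supported, each $g_\alpha$ is itself band-limited. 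By the essentiality relation (\ref{approxLiw1}) in Proposition~\ref{doublemod3} one has $\|g_\alpha - f\|_\Bsp \to 0$, which establishes density of the band-limited elements in $\BspN$.

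For the second assertion I would verify the four axioms of Definition~\ref{mintempstanddef} for $\BiwN$, under the standing hypothesis that $w$ is a Beurling weight of polynomial growth (so that the last clause of Lemma~\ref{BeurBanId1} is available). The sandwich $\ScRd \hookrightarrow \BiwN \hookrightarrow \ScPRd$ is immediate: Schwartz decay dominates any polynomial weight, hence $\ScRd \hookrightarrow \Liwsp$; combined with $\ScRd \hookrightarrow \BspN$ this gives $\ScRd \hookrightarrow \BiwN$, while $\BiwN \hookrightarrow \BspN \hookrightarrow \ScPRd$ follows from the definition (\ref{normBiw}) and the MINTSTA property of $\Bsp$. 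Density of $\ScRd$ in $\BiwN$ is exactly the final statement of Lemma~\ref{BeurBanId1}. Translation invariance with the polynomial dominating function $\max(w(x), C_1 \japx^{s_1})$ is inherited summand-wise from (\ref{normBiw}).

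Modulation invariance is the one axiom requiring a brief computation: because $|M_y f(x)| = |f(x)|$ pointwise, one has $\|M_y f\|_\Liwsp = \|f\|_\Liwsp$, while (\ref{modul1}) on $\BspN$ gives $\|M_y f\|_\Bsp \leq C_2 \japy^{s_2} \|f\|_\Bsp$. Adding the two contributions via (\ref{normBiw}) yields the analogue of (\ref{modul1}) for $\BiwN$. With all four axioms in place, applying the first half of the present lemma to the MINTSTA $\BiwN$ --- whose convolution-module structure over $\Liwsp$ is precisely the content of (\ref{LiwBiwest}) and the associated essentiality statement of Lemma~\ref{BeurBanId1} --- yields density of the band-limited elements in $\BiwN$.

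I anticipate no serious obstacle. The only point that demands vigilance is confirming polynomial growth of $w$, which is implicit in keeping $\BiwN$ within the tempered-distribution framework of the paper. Once that is granted, everything reduces to routine bookkeeping on the two summands of (\ref{normBiw}), and the cascade ``BAU in $\Livsi$ is band-limited $\Rightarrow$ convolution with $f$ is band-limited $\Rightarrow$ approximation in norm'' transfers verbatim from $\BspN$ to $\BiwN$.
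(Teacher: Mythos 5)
Your argument for the first assertion is essentially identical to the paper's: both take a band-limited bounded approximate unit $(e_\alpha)$ in the relevant Beurling algebra (available since polynomial weights satisfy the Beurling--Domar condition), observe that $\supp(\FT(e_\alpha \ast f)) \subset \supp(\widehat{e_\alpha})$ is compact, and invoke the essential-module convergence (\ref{approxLiw1}) to conclude density. Where you go beyond the paper is the second assertion: the paper's proof stops after the first claim and leaves the statement that $\BiwN$ is itself a MINTSTA entirely implicit, whereas you verify the four axioms of Definition~\ref{mintempstanddef} one by one --- the sandwich embeddings from Schwartz decay versus polynomial growth of $w$, density of $\ScRd$ from the last clause of Lemma~\ref{BeurBanId1}, translation invariance summand-wise, and modulation invariance via the elementary identity $\|M_y f\|_{\Liwsp} = \|f\|_{\Liwsp}$ --- and then reapply the first half to $\BiwN$ using (\ref{LiwBiwest}) and its approximate-unit statement. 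That bookkeeping is correct and is exactly what is needed to make the second sentence of the lemma rigorous; it costs nothing beyond the polynomial-growth hypothesis on $w$, which is indeed the standing assumption that keeps $\BiwN$ inside $\ScPRd$.
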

\begin{proof}
Since $\ScRd$ is dense in $\LiwRd$ for the (polynomial) weights appearing
in Proposition \ref{doublemod3} the (BD)-condition is clearly satisfied.
Hence there are (even bounded) approximate units in $\LiwRdN$ which
are band-limited (i.e.\ compactly supported on the Fourier transform side),
we write $(e_\alpha)_{\alpha \in I}$.

Since $e_\alpha \ast f$ approximates $f \in \Bsp$
according to (\ref{approxLiw1}) and
$$\supp(\FT(e_\alpha \ast f)) \subset \supp(\widehat{e_\alpha})$$
we find that the band-limited elements are dense in $\BspN$.
\end{proof}

\tblue{For the proof of the main result we will make use of the following
key observations already explained in detail in \cite{fegu20},
using so-called BUPUs, i.e. {\it (bounded) uniform partitions
of unity of size} $|\Psi| = \delta > 0$, i.e. of countable collections
of continuous functions $\psi_i, i \in I$, with $0 \leq \psi_i(x)$,
$\supp(\psi_i) \subset B_\delta(x_i)$ and $\sumiI \psi_i(x) \equiv 1$. }

For simplicity we only consider the regular case, i.e.\ BUPUs which are obtained as translates of a single function:
\begin{definition} \label{regBUPUdef}
A  (countable) {\it family of translates}
$\Psi = (T_\lambda \psi)_{\lambda \in \Lambda}$,
where $\psi$ is a compactly supported function (i.e.\ $\psi
\in \CcRd$), and $\Lambda =  \Asp (\Zdst)$ a lattice in $\Rdst$ (for
some non-singular $d \times d$-matrix $\Asp$) is called a {\bf regular
BUPU} (or more precisely a $\Lambda$-regular BUPU,
or a $\Lambda$-invariant BUPU)  if
\begin{equation} \label{LamregBUPU1}
 {  \sum_{\lambda \in \Lambda}  \psi(x - \lambda) \equiv 1}.
\end{equation}
We will write   $\diam(\Psi) \leq \gamma$ if $\supp(\psi) \subset B_\gamma(0)$
for some $\gamma \tred{\to} 0$.
\end{definition}

We will be interested in the case of ``finer and finer'' BUPUs, i.e.\, the
case $\diam(\Psi) \to 0$. Sometimes we will simply write $|\Psi|$ instead
of $\diam(\Psi)$. The use of fine partitions of unity is also well
established in the context of usual distribution theory, see e.g.
\cite{wa74}.
The following results are required in the sequel, we refer \cite{fegu20} for  details of the proof. The method can even be used to introduce convolution
of measures   (see \cite{fe16}) and goes back to the discretization introduced
in \cite{fe91-1}. 

\begin{proposition} \label{discrconvBeurl1}
There exists $C_1 > 0$ such that for
any BUPU $\Psifam$ with $\sPsi \leq \delta \leq 1$ the mapping
$f \mapsto \DPsi f$, given by
\begin{equation}\label{DPsimudef06} \tblue{
\DPsi f = \sumiI c_i \delta_{x_i} \quad \mbox{with}
\,\, \,\,  c_i = \intRd f(x)\psi_i(x) dx, }
\end{equation}
satisfies
\begin{equation}
 \sumiI  |c_i| w(x_i) \leq C_1 \normta{f} {\Liwsp}, \,
 f \in \LiwRd.  \end{equation}
Moreover, given $g \in \Liwsp$ and $\epso$ there
exists $ \delta \in (0,1]$ such that $\sPsi \leq \delta$ implies
\begin{equation} \label{convappr02}
\normta {g \ast f - g \ast \DPsi f} \Liwsp < \veps.
\end{equation}
\end{proposition}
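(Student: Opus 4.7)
The plan is to split the proposition into its two assertions and derive both from submultiplicativity of $w$ together with continuity of translation on $\LiwRd$. The common ingredient is the pointwise comparison $w(x_i) \leq w(x_i - x) \, w(x)$, which for $x \in \supp(\psi_i) \subset B_\delta(x_i)$ with $\delta \leq 1$ becomes $w(x_i) \leq C_1 \, w(x)$, where $C_1 := \sup_{|y|\leq 1} w(y) < \infty$ by local boundedness of any Beurling weight. This constant will serve as the $C_1$ appearing in the statement.

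For the first assertion I would estimate $|c_i| \leq \intRd |f(x)| \, \psi_i(x) \, dx$ using $\psi_i \geq 0$, multiply by $w(x_i)$, and absorb $w(x_i)$ into the integrand via the comparison above. Summing over $i$ and using $\sumiI \psi_i \equiv 1$ collapses everything to
\begin{equation*}
\sumiI |c_i| \, w(x_i) \, \leq \, C_1 \intRd |f(x)| \, w(x) \, dx \, = \, C_1 \, \normta{f}{\Liwsp}.
\end{equation*}

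For the second assertion I would start from the pointwise identity
\begin{equation*}
(g \ast f)(y) - (g \ast \DPsi f)(y) = \sumiI \intRd \bigl[ T_x g - T_{x_i} g \bigr](y) \, f(x) \, \psi_i(x) \, dx,
\end{equation*}
which follows directly from the definition of $\DPsi f$. Writing $T_x g - T_{x_i} g = T_{x_i}(T_{x - x_i} g - g)$ and using the submultiplicativity estimate $\| T_{x_i} h \|_\Liwsp \leq w(x_i) \, \|h\|_\Liwsp$, the $\Liwsp$-norm in $y$ of the bracket is bounded by $w(x_i) \, \| T_{x - x_i} g - g \|_\Liwsp$. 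Since $|x - x_i| \leq \delta$ whenever $\psi_i(x) \neq 0$, this is in turn at most $w(x_i) \, \omega_g(\delta)$, where $\omega_g(\delta) := \sup_{|z| \leq \delta} \| T_z g - g \|_\Liwsp$ is the $\Liwsp$-modulus of continuity of $g$. Applying the first assertion to $|f|$ in place of $f$ then yields
\begin{equation*}
\| g \ast f - g \ast \DPsi f \|_\Liwsp \, \leq \, C_1 \, \omega_g(\delta) \, \normta{f}{\Liwsp}.
\end{equation*}

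The only non-trivial input, and the step I regard as the main obstacle, is that $\omega_g(\delta) \to 0$ as $\delta \to 0$ for every $g \in \Liwsp$, i.e.\ the continuity of translation in the Beurling algebra $\LiwRdN$. This is classical within Reiter's framework (\cite{re68}, \cite{rest00}) and follows by density of $\CcRd$ in $\Liwsp$ together with uniform continuity of compactly supported continuous functions and local boundedness of $w$. Given $\veps > 0$ and $f, g \in \Liwsp$, one then simply chooses $\delta \in (0,1]$ so small that $C_1 \, \omega_g(\delta) \, \normta{f}{\Liwsp} < \veps$, which establishes (\ref{convappr02}). The only bookkeeping subtlety is that the estimate scales with $\normta{f}{\Liwsp}$, so the choice of $\delta$ must depend on $f$ as well (or, alternatively, the conclusion holds uniformly for $f$ ranging over any bounded set in $\LiwRd$).
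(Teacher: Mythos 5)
Your proof is correct, and since the paper gives no proof of this proposition itself (it defers to the companion paper \cite{fegu20}), your argument --- submultiplicativity in the form $w(x_i)\leq w(x_i-x)w(x)$ with $C_1=\sup_{|y|\leq 1}w(y)$, plus the vanishing of the $\Liwsp$-modulus of continuity $\omega_g(\delta)$ of translation --- is exactly the standard discretization argument being invoked there. Your closing caveat that the bound scales with $\normta{f}{\Liwsp}$, so that $\delta$ is chosen uniformly only over bounded sets of $\LiwRd$, is the intended reading and matches how the authors state and use the companion estimate in Proposition \ref{Liwmod1} and Remark \ref{Prop3unif}.
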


\vspace{2mm}
\begin{proposition} \label{Liwmod1}
Since any MINTSTA  $\BspN$  is an essential Banach module over some Beurling
algebra $\LiwRdN$ one has for any  $h \in \LiwRd$: Given $\epso$ there
exists $\delta > 0$ such that $|\Psi| \leq \delta$ implies:
\begin{equation}\label{convappr01}
\Bnorm{ g \ast h - g \ast \DPsi h } \leq \veps \normta{h} {\Liwsp}.
\end{equation}
\end{proposition}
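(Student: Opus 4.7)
The plan is to fix $g \in \Bsp$ (the element being convolved against) and $\veps > 0$, and to produce $\delta > 0$ depending only on $g$ and $\veps$ such that the two maps $h \mapsto g \ast h$ and $h \mapsto g \ast \DPsi h$, viewed as bounded operators from $\LiwRd$ to $\Bsp$, differ in operator norm by at most $\veps$ whenever $|\Psi| \leq \delta$. The decisive ingredient is the strong continuity of translation in $\Bsp$: from the density of $\ScRd$ in $\Bsp$ together with the bound $\Bnorm{T_x f} \leq C_1 \japx^{s_1} \Bnorm{f}$ of Definition~\ref{mintempstanddef}(3), a standard three-$\veps$ argument (approximate $g$ by a Schwartz function, use uniform continuity of translation there, then retranslate) yields $\lim_{z \to 0} \Bnorm{T_z g - g} = 0$. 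Hence, for any prescribed $\eta > 0$, there exists $\delta \in (0,1]$ with $\sup_{|z| \leq \delta} \Bnorm{T_z g - g} \leq \eta$.

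For a regular BUPU $\Psifam$ with centers $(x_i)_{i \in I}$ and $|\Psi| \leq \delta$, I would decompose $h = \sumiI \psi_i h$ (converging in $\LiwRd$) and treat the convolution as a $\Bsp$-valued Bochner integral, $g \ast \phi = \intRd T_x g\, \phi(x)\, dx$. Since $\DPsi h = \sumiI c_i \delta_{x_i}$ with $c_i = \intRd \psi_i(x) h(x)\, dx$, one has $g \ast \DPsi h = \sumiI c_i\, T_{x_i} g$, so
\begin{equation*}
 g \ast h - g \ast \DPsi h \;=\; \sumiI \intRd \bigl(T_x g - T_{x_i} g\bigr)\, \psi_i(x)\, h(x)\, dx,
\end{equation*}
with absolute convergence in $\Bsp$-norm justified by the essential-module estimate of Proposition~\ref{doublemod3}(1) in combination with Proposition~\ref{discrconvBeurl1}.

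Writing $T_x g - T_{x_i} g = T_{x_i}(T_{x-x_i} g - g)$ and invoking \eqref{transl1} gives, on $\supp(\psi_i) \subset B_\delta(x_i)$, the pointwise bound $\Bnorm{T_x g - T_{x_i} g} \leq C_1\, w(x_i)\, \eta$. Submultiplicativity of the Beurling weight yields $w(x_i) \leq w(x_i - x)\, w(x) \leq M\, w(x)$ for $|x - x_i| \leq \delta \leq 1$, with $M := \sup_{|y| \leq 1} w(y) < \infty$. Summing termwise and using $\sumiI \psi_i \equiv 1$ one arrives at
\begin{equation*}
 \Bnorm{g \ast h - g \ast \DPsi h} \;\leq\; C_1 M\, \eta \intRd w(x)\, |h(x)|\, dx \;=\; C_1 M\, \eta\, \normta{h}{\Liwsp},
\end{equation*}
and it is enough to have chosen $\eta$ at the outset so that $C_1 M\, \eta \leq \veps$.

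The main obstacle is the first step, namely the strong continuity of $z \mapsto T_z g$ in $\Bsp$ for a fixed but arbitrary $g \in \Bsp$; the density of $\ScRd$ combined with the polynomial growth bound on the translation operators is exactly what is needed to carry out this standard reduction. Once this is secured, every remaining step — the Bochner-integral representation of $g \ast \phi$, the termwise $\Bsp$-norm estimate, and the passage from $w(x_i)$ to $w(x)$ via submultiplicativity — is routine in view of the MINTSTA axioms and the essential-module property $\LiwRd \ast \Bsp \hookrightarrow \Bsp$.
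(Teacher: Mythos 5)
Your proof is correct and follows essentially the route the paper intends: the paper itself gives no proof of Proposition~\ref{Liwmod1} but defers to \cite{fegu20}, and Remark~\ref{Prop3unif} makes clear that the mechanism is exactly the one you use, namely the strong continuity of $z \mapsto T_z g$ in $\BspN$ (from density of $\ScRd$ plus the polynomially bounded translation operators) combined with the termwise BUPU estimate $\Bnorm{T_x g - T_{x_i} g} \leq C_1\, w(x_i) \sup_{|z|\leq \delta} \Bnorm{T_z g - g}$ and submultiplicativity of $w$. I see no gaps.
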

\begin{remark} \label{Prop3unif}
It is important to note (for the proof of our main theorem)
that the convergence is uniform for bounded subsets of $\LiwRd$,
because it depends only on the continuous translation property
in $\BspN$, and thus on the quality of $g \in \Bsp$.
\end{remark}

\vspace{3mm}

\section{Preparing the Ground}

Preparing for the main result of this note we start
by recalling the  characterization of compact
subsets in spaces with a double module structure as described in \cite{fe84}.
We will  rephrase the result given there in a form which
is more suitable for the current setting. It just makes use of the fact
that approximate units can always be chosen from a dense subset of the Banach
algebra acting on $\BspN$ (by convolution resp.\ pointwise multiplication):
\begin{theorem} \label{compsetchar}
A closed and bounded subset $M$ of a MINTSTA is compact in $\BspN$ if and only
it is {\it tight} and {\it equicontinuous}, which means, that
for any $\epso$ there exist a band-limited function $g \in \ScRd$
and a compactly supported function $h \in \ScRd$ such that
  $ f \mapsto g \ast f$ and $f \mapsto h \cdot f$
are bounded operators on $\BspN$ and satisfy
\begin{equation}\label{tightchar1}
  \Bnorm{ h \cdot f - f} \leq \veps, \quad \forall f \in M,
\end{equation}
and
\begin{equation}\label{equichar1}
    \Bnorm{ g \ast f - f} \leq \veps, \quad \forall f \in M.
\end{equation}
\end{theorem}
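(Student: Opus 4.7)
The plan is to prove the two implications separately, with the sufficiency direction being the more substantive.

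\emph{Necessity.} Assume $M$ is compact and fix $\epso$. Cover $M$ by a finite $\delta$-net $\{f_1,\ldots,f_N\} \subset M$ for a $\delta$ to be adjusted. By Proposition \ref{doublemod3}, $\BspN$ is essential as a convolution module over $\Livsi$ and as a pointwise module over $\Astt$. Using Lemma \ref{Biwbandl1}(i), bounded approximate units for $\Livsi$ can be drawn from band-limited elements of $\ScRd$, and by regularity of $\Astt$ (Lemma \ref{Biwbandl1}(ii)) approximate units for $\Astt$ can be realized by compactly supported elements of $\ScRd$. Applied to the finite family $\{f_i\}$ this yields $g \in \ScRd$ band-limited and $h \in \ScRd$ compactly supported satisfying $\Bnorm{g \ast f_i - f_i} < \veps/3$ and $\Bnorm{h \cdot f_i - f_i} < \veps/3$ for every $i$. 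A $3\veps$-argument, with $\delta$ chosen against the fixed bounds $\normta{g}{\Livsi}$ and $\normta{h}{\Astt}$ from the underlying bounded approximate units, transfers the estimates uniformly to all of $M$.

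\emph{Sufficiency.} Suppose $M$ is closed, bounded, tight, and equicontinuous. To show $M$ is totally bounded, fix $\epso$ and invoke the hypothesis to select $g, h \in \ScRd$ at sufficient precision; the operator $T(f) := g \ast (h \cdot f)$ then satisfies
$$ \Bnorm{f - T(f)} \,\leq\, \Bnorm{f - g \ast f} + \normta{g}{\Livsi}\,\Bnorm{f - h \cdot f} \,<\, \veps/3 \qquad \forall\, f \in M. $$
It therefore suffices to exhibit a finite $\veps/3$-net for $T(M)$ in $\BspN$, since the resulting total boundedness of $M$ is upgraded to compactness by the closedness hypothesis.

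\emph{Main obstacle: precompactness of $T(M)$.} I expect this to be the key step. The observation is that, because $h \in \ScRd$ has compact support and $g \in \ScRd$, the operator $T$ extends to a continuous map $\ScPRd \to \ScRd$: for $f \in \ScPRd$ the product $h \cdot f$ is a tempered distribution of compact support, and convolution with the Schwartz function $g$ carries compactly supported distributions into $\ScRd$. Hence $T$ maps bounded subsets of $\ScPRd$ into bounded subsets of the Fr\'echet space $\ScRd$. Since $M$ is bounded in $\BspN \hookrightarrow \ScPRd$, the image $T(M)$ is bounded in $\ScRd$, and the Montel property of $\ScRd$ yields precompactness of $T(M)$ in $\ScRd$. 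The continuous embedding $\ScRd \hookrightarrow \BspN$ then transfers this precompactness to $\BspN$, completing the proof. The only subtlety is the uniformity of the Schwartz-seminorm bounds on $T(M)$, which follows directly from the boundedness of $M$ in $\ScPRd$ together with the fixed Schwartz seminorms of $g$ and $h$.
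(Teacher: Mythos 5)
Your proof is correct, but note that the paper itself offers no proof of Theorem \ref{compsetchar}: it is presented as a reformulation of the compactness criterion of \cite{fe84}, with only the remark that the approximate units can be drawn from dense subalgebras (band-limited Dirac sequences for convolution, compactly supported plateau functions for pointwise multiplication). Your necessity direction --- finite $\delta$-net plus \emph{bounded} approximate units, with $\delta$ calibrated against the uniform algebra-norm bound --- is exactly the standard argument underlying that reference. Where you genuinely diverge is the sufficiency direction: the argument of \cite{fe84} is designed for general double-module spaces over LCA groups and must establish total boundedness of $T(M)$ by direct discretization, whereas you exploit the sandwich (\ref{ScSandw}) specific to the tempered setting: $h\cdot f$ is a compactly supported distribution, $g\ast(h\cdot f)$ lies in $\ScRd$ with Schwartz seminorms controlled uniformly over the bounded (hence equicontinuous) set $M\subset\ScPRd$, and the Montel property of $\ScRd$ together with $\ScRd\hkr\BspN$ yields precompactness of $T(M)$. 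This is a real shortcut, bought at the price of not transferring to the LCA/ultradistribution settings the authors announce for future work. Two points to tighten, neither a genuine gap: (i) in $\Bnorm{f-T(f)}\leq\Bnorm{f-g\ast f}+\normta{g}{\Livsi}\Bnorm{f-h\cdot f}$ the factor $\normta{g}{\Livsi}$ depends on $g$, so you should first fix $g$ via equicontinuity at level $\veps/6$ and then invoke tightness at level $\veps/(6\normta{g}{\Livsi})$ to choose $h$ (or appeal to the uniform bound of the underlying approximate units); (ii) you silently identify the abstract module actions on $\Bsp$ with the distributional product and convolution --- this is justified because both agree on the dense subspace $\ScRd$ and both are continuous into $\ScPRd$, but it deserves a sentence.
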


\begin{remark}
The assumptions boil down to choose appropriate elements from
a Dirac sequence $(g_n)_{n \geq 1}$, e.g.\ to choose $g$ as a  suitable
compression of a band-limited function $g_0$ with
$\widehat{g_0}(0)=1$ and $\supp(\widehat{g_0})$ compact.
Correspondingly one can choose for pointwise
multiplication from a  sequence of  (inverse)
Fourier transforms of another Dirac sequence, i.e.\ $ h_n := \FT\inv(g_n)$,
but now with compact support on the time side.

Note that approximate units can be chosen from any dense subspace
of $\LiwRdN$. The most useful examples are band-limited Dirac sequences
and compactly supported pointwise approximate units (which help to
localize functions).
\end{remark}

For the proof of our main result we will also need the following
useful lemma:
\begin{lemma} \label{tightpres1}
Given a double Banach module  $\BspN$, over a Beurling algebra $\LiwRd$
with respect to convolution and over $\FLivRd$ under pointwise
multiplication. Assume that $M$ is a bounded and tight subset of $\BspN$
and  $S$ a bounded, tight subset  of $\LiwRdN$. Then the set
$ S \ast M$ is a bounded and tight subset of $\BspN$.
\end{lemma}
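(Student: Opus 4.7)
The plan is to verify the two defining properties of a precompact-style set separately. \emph{Boundedness} of $S \ast M$ in $\Bsp$ is immediate from the module estimate \eqref{BLivisestim}: setting $C_S := \sup_{g \in S}\|g\|_\Liwsp$ and $C_M := \sup_{f \in M}\|f\|_\Bsp$, one has $\|g \ast f\|_\Bsp \leq C_S\, C_M$ for every pair $(g,f) \in S \times M$. The substance of the lemma is therefore the \emph{tightness} of $S \ast M$ in $\Bsp$, which by Theorem~\ref{compsetchar} reduces to producing, for every $\epso$, a compactly supported $\chi \in \ScRd$ with $\|\chi \cdot (g \ast f) - g \ast f\|_\Bsp \leq \veps$ uniformly in $(g,f) \in S \times M$.

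To decouple the cut-off from the residual estimates I would fix once and for all a template bump $\chi_0 \in \DRd$ with $\chi_0 \equiv 1$ on a neighborhood of the origin and set $\chi_R(x) := \chi_0(x/R)$ for $R \geq 1$. Since the polynomial weight $v_{s_2}$ is moderate, a direct Fourier-side computation shows that $\|\chi_R\|_{\Astt}$ stays bounded by some constant $C_0$ independent of $R \geq 1$. Consequently both $\chi_R$ and $1 - \chi_R$ act as pointwise multipliers on $\Bsp$ with operator norms majorised by $C_0$ and $1+C_0$ respectively, uniformly in $R$. This uniform-in-$R$ bound is the technical lubricant of the whole argument.

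Now I invoke the two tightness hypotheses with carefully calibrated precision. Using Theorem~\ref{compsetchar} applied to $M$, pick a compactly supported $h_M \in \ScRd$ with $\|f - h_M \cdot f\|_\Bsp < \veps /\bigl(4 C_S (1+C_0)\bigr)$ for all $f \in M$; using tightness of $S$ in the Beurling algebra, pick a compactly supported $\phi \in \CcRd$ with $\|g - \phi \cdot g\|_\Liwsp < \veps / \bigl(4 (1+C_0)\, C_M\, \|h_M\|_\Astt\bigr)$ for all $g \in S$. Decompose
\begin{equation*}
  g \ast f \;=\; (\phi g) \ast (h_M f) \;+\; (g - \phi g) \ast (h_M f) \;+\; g \ast (f - h_M f).
\end{equation*}
The principal term $(\phi g) \ast (h_M f)$ is supported in the fixed compact set $K := \supp(\phi) + \supp(h_M)$, while the two remainders are controlled in $\Bsp$ by the module estimate and the two displayed smallness conditions, with total norm at most $\veps/\bigl(2(1+C_0)\bigr)$.

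Finally, choose $R$ so large that $K \subset B_R(0)$; then $\chi_R \equiv 1$ on $K$, so $\chi_R$ fixes the principal term and multiplication by $1-\chi_R$ annihilates it. Applying $1-\chi_R$ to the decomposition therefore leaves only the two remainders, to which $1-\chi_R$ contributes a multiplier factor of at most $1+C_0$. The result is $\|g \ast f - \chi_R \cdot (g \ast f)\|_\Bsp < \veps$ uniformly in $(g,f) \in S \times M$, which is exactly the tightness required by Theorem~\ref{compsetchar}. The genuine obstacle in this plan is the apparent circularity between the choice of the cut-off $\chi_R$ and the smallness required of the residuals, since a priori the multiplier norm of $\chi_R$ might blow up with $R$; it is resolved by the uniform estimate $\|\chi_R\|_\Astt \leq C_0$, which is itself a consequence of the polynomial (Beurling--Domar) nature of the weight governing $\Astt$.
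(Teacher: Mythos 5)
Your proposal is correct and follows essentially the same route as the paper's proof: the identical decomposition of $g\ast f$ into a compactly supported principal term plus two module-controlled remainders (estimated via the tightness of $S$ in $\LiwRd$ and of $M$ in $\Bsp$), followed by a plateau cut-off of uniformly bounded multiplier norm. The only, harmless, deviation is that you build the uniformly bounded plateau family explicitly by dilation --- note that the bound $\|\widehat{\chi_R}\|_{\Lsp^1_{v_s}} \leq \|\widehat{\chi_0}\|_{\Lsp^1_{v_s}}$ for $R\geq 1$ really rests on the radial monotonicity of $v_s$ rather than on the Beurling--Domar condition --- whereas the paper obtains such a family abstractly from the local-units result of Bachelis, Parker and Ross \cite{baparo72}.
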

\begin{proof}
Without loss of generality 
$\sup_{g \in S} \normta{g} {\LiwRd}  \leq 1$
and $\sup_{f \in M} \normta{f} {\Bsp}  \leq 1$.
The estimate
$\Bnorm{g \ast f} \leq \normta {g} {\Liwsp} \cdot
   \Bnorm{f} \leq 1$
shows that $S \ast M$ is bounded  in $\BspN$.

Since $\BspN$ is a Banach module over the Fourier-Beurling
algebra $\FLivRd$ (for some weight of polynomial growth) we
may assume that there is a compactly supported (pointwise)
approximation to the identity $(k_\alpha)_{\alpha \in I}$,
with $\normta {k_\alpha} {\FLivsp} \leq C_v < \infty$ (usually
$C_v = 1$).
Since $\LivRd$ contains $\ScRd$ it is clear that $\FLivRd$
also contains compactly supported functions $h \in \ScRd$ which are
plateau-like, i.e.\ which satisfy $h(x) \equiv 1$ on a given compact
set $Q$. Thanks to \cite{baparo72} we may combine these two properties
and observe that there is a bounded approximate unit
$(k_\alpha)$  in $\FLivRd$ with $\normta {k_\alpha} {\FLivRd} \leq C_v$
for all $\alpha$, and with the extra property
that for any compact set $Q \subset \Rdst$ one finds some
index $\alpha_0$ such that $ k_\alpha(x) \equiv 1 $ on
$Q$ for all $\alpha \succeq \alpha_0$.

Given  now  $\epso$, the tightness assumptions allow to
find  $k_1,k_2 \in \CcRd$  with
$$
\normta{ k_1 \cdot g - g} \Liwsp \leq \veps/(2\cdot C_v), \quad g \in S,
$$
and
$$
\Bnorm{k_2 \cdot f - f}  \leq \veps/2, \quad f \in M.
$$
Combining these estimates we find that
\begin{equation}\label{tightest2}
  \Bnorm{ (k_1\cdot g) \ast (k_2 \cdot f) - g \ast f} \leq
\Bnorm{(k_1 \cdot g - g) \ast (k_2 \cdot f)} +
\Bnorm{ g \ast (k_2 \cdot f - f)}.
\end{equation}
The first term can be estimated as
\begin{equation}\label{tightest3}
\leq \normta{ k_1 \cdot g - g} \Liwsp \Bnorm{k_2 \cdot f}
\leq (\veps/(2\cdot C_v)) \normta{k_2} {\FLivsp} \Bnorm{f} \leq \veps/2,
\end{equation}
and the second term  in a similar way.
Combining these estimates we find that
\begin{equation}\label{tightest4}
\Bnorm{(k_1\cdot g)\ast (k_2\cdot f)-g\ast f}\leq \veps,
\quad g \in S, f \in M.
\end{equation}
Now it is obvious that the elements in $\Bsp$ of the
form $$ \{ k_1\cdot g) \ast (k_2 \cdot f), g \in S, f \in M \}$$
have common compact support inside  the compact set
$ Q = \supp(k_1) + \supp(k_2)$.
As explained above
we can find now some $k_3 \in \FLivRd$, with $$\normta {k_3} \FLivRd
= \normta {\widehat{k_3}} {\Lsp^1_v}
\leq C_v,
\quad \mbox{and} \quad k_3(x)
 \equiv 1 \,\,\mbox{on} \,\, Q. $$
Since $\BspN$ is a pointwise Banach module over
$\FLivRd$  we also have
\begin{equation}\label{FLivmod2}
  \normta{ k \cdot h} \Bsp \leq \normta k {\FLivRd} \Bnorm{h},
  \quad  k \in \FLivRd, \,  h\in \Bsp.
\end{equation}
Multiplying the terms in estimate (\ref{tightest4})
by $k_3$ (the first one is unchanged!), we obtain
\begin{equation}\label{tightest5a}
\Bnorm{(k_1\cdot g)\ast (k_2\cdot f)- k_3\cdot(g\ast f)}
\leq \normta {\widehat{k_3}} {\Lsp^1_v} \cdot
\Bnorm{(k_1\cdot g)\ast (k_2\cdot f)-g\ast f}
\leq C_v \, \veps.
\end{equation}
Hence we get the final estimate by combining estimates (\ref{tightest4})
and (\ref{tightest5a}):
\begin{equation}\label{tightest5b}
\Bnorm{ k_3\cdot(g\ast f) - g \ast f}\leq (1+ C_v) \, \veps,
\quad  g \in S, f \in M.
\end{equation}
 \end{proof}

Another estimate which we will need, but which might be useful
as a separate result elsewhere, is formulated in the following lemma.

In the course of the proof of our main result we will
also make use of the following observation:
\begin{lemma} \label{Liwbdn04}
If $\BspN \hkr \ScPRd$ is a translation
invariant Banach space with
$$ \Bnorm{T_xf} \leq w(x) \Bnorm{f}, \quad \forall f \in \Bsp. $$
Then for any pair of functions $g_3 \in \ScRd$
and $k \in \CcRd$ the product-convolution operator
$f \mapsto  k \cdot(g_3 \ast f)$
defines a bounded operator from $\BspN$ into $\LiwRd$.
\end{lemma}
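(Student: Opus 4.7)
The plan is to exploit two facts: that convolution of a tempered distribution with a Schwartz function produces a smooth function whose pointwise values can be controlled by a single Schwartz seminorm of $f$, and that multiplication by the compactly supported $k$ localizes everything so that the weight $w$ only has to be controlled on a fixed compact set.

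First I would represent the convolution pointwise as
\[
  (g_3 \ast f)(x) = \langle f, T_x \check{g_3} \rangle, \qquad x \in \Rdst,
\]
where $\check{g_3}(y) := g_3(-y)$ and $\langle \cdot, \cdot \rangle$ is the $\ScPRd$--$\ScRd$ duality; this is a standard identification valid for $f \in \ScPRd$, $g_3 \in \ScRd$, and the right-hand side is a smooth function of $x$ of at most polynomial growth.

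Next I would invoke the continuous embedding $\Bsp \hkr \ScPRd$. Since $\ScRd$ is a barreled Fr\'echet space, the Banach--Steinhaus theorem applied to the set $\{f \in \Bsp : \Bnorm{f} \leq 1\}$, viewed as an equicontinuous family of functionals on $\ScRd$, yields a single continuous Schwartz seminorm $p$ such that
\[
  |\langle f, \varphi \rangle| \leq p(\varphi)\, \Bnorm{f}, \qquad f \in \Bsp,\ \varphi \in \ScRd.
\]
Specializing to $\varphi = T_x \check{g_3}$ gives $|(g_3 \ast f)(x)| \leq p(T_x \check{g_3})\, \Bnorm{f}$. Since translation is continuous in $\ScRd$, the map $x \mapsto p(T_x \check{g_3})$ is continuous, and in particular bounded by some constant $C_{g_3,k}$ on the compact set $Q := \supp(k)$.

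Finally, using that $w$ is locally bounded (as is automatic for the Beurling weights in the standing hypotheses), one concludes
\[
  \normta{k \cdot (g_3 \ast f)}{\Liwsp} \,=\, \int_Q |k(x)|\, |(g_3 \ast f)(x)|\, w(x)\, dx \,\leq\, C_{g_3,k}\, \Bnorm{f} \int_Q |k(x)|\, w(x)\, dx,
\]
and the last integral is finite since $k \in \CcRd$ and $w$ is bounded on $Q$, giving the desired boundedness from $\BspN$ into $\LiwRd$. The main delicate point I anticipate is the extraction of the \emph{uniform} Schwartz seminorm $p$ from the mere continuity of the inclusion $\Bsp \hkr \ScPRd$; once that is in hand, the compact support of $k$ makes the localization and weighted integration routine.
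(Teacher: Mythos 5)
Your proof is correct, but it reaches the key pointwise bound by a genuinely different mechanism than the paper. The paper also starts from a pointwise duality representation of the convolution, but it writes $g_3 \ast f(x) = \sigma_3(T_{-x}f)$ with $\sigma_3$ the functional induced by $g_3(-\cdot) \in \ScRd \subset \Bsp'$, i.e.\ it puts the translation on $f$ and then invokes the standing hypothesis $\Bnorm{T_xf} \leq w(x)\Bnorm{f}$ to get $|g_3 \ast f(x)| \leq \normta{g_3}{\Bsp'}\, w(-x)\, \Bnorm{f}$, with $w(-x) \leq C_k$ on $-\supp(k)$; this yields the explicit constant $C_k \normta{k}{\Liwsp} \normta{g_3}{\Bsp'}$ and uses nothing beyond the single functional $\sigma_3 \in \Bsp'$ obtained by dualizing $\Bsp \hkr \ScPRd$. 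You instead put the translation on the Schwartz function, $g_3\ast f(x) = \langle f, T_x\check{g}_3\rangle$, and pay for it with a Banach--Steinhaus/equicontinuity argument on the Fr\'echet space $\ScRd$ to extract a uniform seminorm $p$ --- precisely the ``delicate point'' you flag, and precisely what the paper's route avoids. In exchange, your argument never uses the translation-invariance hypothesis of the lemma at all (only the continuous embedding into $\ScPRd$ and local boundedness of $w$), so it actually proves a slightly more general statement; the paper's argument is more elementary and keeps the constants explicit, which is in the spirit of the constructive estimates used later in the main theorem. Both proofs then conclude identically by integrating over $\supp(k)$.
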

\begin{proof}
By assumption any $g_3 \in \Bsp \subset \ScPRd $
defines a bounded linear functional  on $\BspN$,
which (at least for $f \in \ScRd \subset \Bsp$)
allows to interpret the convolution for any $f \in \Bsp$ in a pointwise sense:
\begin{equation}\label{convBScP}
   g_3 \ast  f(x) 
    = \intRd f(z + x)  g_3(-z) dz
     = \sigma_{3} (T_{-x}f),
\end{equation}
where $\sigma_3$ means the tempered distribution
induced by $g_3 \chck \in \Bsp$
(using $g\chck(x):= g(-x)$).
By the continuity of both sides we can thus use the identity
\begin{equation}  \label{repr3}
 g_3 \ast f(x) = \sigma_3 (T_{-x}f),
\quad f \in \Bsp, \, x \in \Rdst.
\end{equation}
Since translation is continuous, this implies that $g_3 \ast f$
is a continuous function (of polynomial growth). As a pointwise
estimate we have
\begin{equation}\label{convScPpt1}
  |g_3 \ast  f(x)| \leq \normta{g_3} {\BPsp}   \Bnorm{T_{-x}f}
   \leq  \normta{g_3} {\BPsp}  \opnorm{T_{-x}} \Bsp \Bnorm{f}.
\end{equation}
Since $K :=\supp(k)$ is a compact set, the weight function
is bounded on $-K$, or
$$  \sup_{x \in K}  w(-x)  = C_{k} < \infty $$
which implies for any $x \in \supp(k)$ the pointwise estimate
\begin{equation}\label{convScPpt2}
  |k(x)| |g_3 \ast  f(x)|
   \leq  w(-x)  \normta{g_3} {\BPsp}    \Bnorm{f}
   \leq  C_k \normta{g_3} {\BPsp}    \Bnorm{f}.
\end{equation}
This in turn provides us with the required estimate:
\begin{equation}\label{convScPpt3}
\normta {|k| \cdot |g_3 \ast  f|} {\Liwsp} =
 \intRd |k(x)| |g_3 \ast  f(x)| w(x) dx \leq
  [ C_k \normta {k} {\Liwsp} \normta{g_3} {\Bsp'}]   \Bnorm{f}.
\end{equation}
\end{proof}

For the proof of the main result we will make use of the following
	key observations already explained in detail in \cite{fegu20},
	using so-called BUPUs, i.e. {\it (bounded) uniform partitions
		of unity of size} $|\Psi| = \delta > 0$, i.e. of countable collections of continuous functions $\psi_i, i \in I$, with $0 \leq \psi_i(x)$,
	$\supp(\psi_i) \subset B_\delta(x_i)$ and $\sumiI \psi_i(x) \equiv 1$.

The following technical lemma will be useful for our considerations.
It requires the notion of {\it well-spread} sets, playing a 
big role for the use of BUPUs for the characterization 
of {\it Wiener amalgam space} (see \cite{fe83}) or in {\it coorbit theory}
(see \cite{fegr89}).
\begin{definition}
	A discrete family $X= (x_i)_{i \in I}$ is {\it relatively separated}
     if for some (and thus for any) $r > 0$ the number of points from $X$ is uniformly bounded, 	i.e.\ if
	$$ \sup_{x \in \Rdst}  \# \{ i \in I \suth x_i \in B_r(x) \} \leq C_r < \infty.$$
\end{definition}
\begin{lemma} \label{exBUPU01}
	For any family $(x_j)_{j \in J}$ which is $\delta/3$ dense, there exists
	a subfamily $(x_i)_{i \in I}$ which is well-spread and $\delta$-dense.
	Moreover, there exists corresponding BUPUs (in the sense of \cite{fe83}) 
$(\psi_i)_{i \in I}$ 	of size $\delta$, with $ \supp(\psi_i) \subseteq B_\delta(x_i), i \in I$. 
\end{lemma}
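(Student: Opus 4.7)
The plan is to first extract a maximal $\delta/3$-separated subfamily from $(x_j)_{j \in J}$, then verify the two geometric properties, and finally manufacture the BUPU by translating a single bump along this subfamily and normalizing. Concretely, using Zorn's lemma (or a straightforward greedy selection argument in $\Rdst$, which suffices), I would choose an index set $I \subseteq J$ maximal with respect to the property that $|x_i - x_{i'}| \geq \delta/3$ for all distinct $i, i' \in I$.

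To verify that $(x_i)_{i \in I}$ is well-spread, I would note that the open balls $B_{\delta/6}(x_i)$ are pairwise disjoint. Any $x_i$ belonging to $B_r(x)$ forces $B_{\delta/6}(x_i) \subseteq B_{r+\delta/6}(x)$, so a standard volume comparison yields $\#\{i \in I : x_i \in B_r(x)\} \leq (1 + 6r/\delta)^d$, uniformly in $x$. For $\delta$-density, given any $x \in \Rdst$, the assumption on $J$ provides $x_j$ with $|x - x_j| \leq \delta/3$; by maximality of $I$, either $x_j \in I$ already, or else some $x_i \in I$ satisfies $|x_j - x_i| < \delta/3$. The triangle inequality then gives $|x - x_i| < 2\delta/3 < \delta$, in fact with a strict margin that will be crucial below.

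For the BUPU construction, fix a nonnegative function $\varphi \in \DRd$ with $\supp(\varphi) \subseteq \overline{B_\delta(0)}$ and $\varphi(y) \geq 1$ for $|y| \leq 2\delta/3$ (a standard mollification of the indicator function of $B_{2\delta/3}(0)$ works). Define $\Phi(x) := \sum_{i \in I} \varphi(x - x_i)$. Well-spreadness guarantees the sum is locally finite with at most $C = (1 + 6\delta/\delta)^d$ nonzero terms at any point, so $\Phi$ is smooth with $\Phi \leq C \|\varphi\|_\infty$; the strict $2\delta/3$-density ensures that at every $x$ at least one term is $\geq 1$, hence $\Phi \geq 1$ pointwise. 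Setting $\psi_i(x) := \varphi(x - x_i)/\Phi(x)$ produces nonnegative smooth functions with $\supp(\psi_i) \subseteq B_\delta(x_i)$ and $\sum_{i \in I} \psi_i \equiv 1$, i.e.\ a BUPU of size $\delta$ in the sense of \cite{fe83}, with uniform control of the local norms coming from the fact that all the $\psi_i$ are built from a single $\varphi$ divided by a function bounded above and below.

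The only real technical point is obtaining the two-sided control on $\Phi$: the upper bound rests on the well-spread property, while the lower bound rests on the strict inequality $2\delta/3 < \delta$, which leaves the required room to choose $\varphi$ bounded below by $1$ on the ball containing the relevant $x_i$. Everything else is a routine triangle-inequality and volume-counting argument.
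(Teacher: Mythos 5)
Your proof is correct and follows essentially the same route as the paper: extract a maximal separated subfamily, verify the well-spread property by a disjoint-balls volume count, get density from the triangle inequality, and build the BUPU by normalizing translates of a single bump $\varphi$ that equals (or exceeds) $1$ on $B_{2\delta/3}(0)$. Your slightly tighter separation threshold ($\delta/3$-separation rather than disjoint $\delta/3$-balls) in fact yields genuine $2\delta/3$-density, which makes the lower bound $\Phi \geq 1$ cleaner than in the paper's own write-up, where the covering radius obtained from the triangle inequality is really $\delta$ rather than the claimed $2\delta/3$.
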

\begin{proof}
The generic approach to this task is to find a maximal subfamily $I \subset J$
	such that one has a pavement of size $\delta/3$ in $\Rdst$,
       i.e.\ such that the balls
	$B_{\delta/3}(x_i)$ are pairwise disjoint for $i \in I \subset J$.
	The maximality implies that for any $ j \in J \setminus I$ the
	corresponding ball $B_{\delta/3}(x_j)$ has nontrivial intersection
	with one of the balls centered at $x_i$, for some $i \in I$. As the
	original family (running through the index set $J$) covers all of $\Rdst$
	it is an immediate consequence of the triangle inequality that the family
	$B_{2\delta/3}(x_i), i \in I$ covers all of $\Rdst$, and the same is true
	for $(B_\delta(x_i))_{i \in I}$.

In order to estimate the number of points $x_i, i \in I$ which are inside of $B_r(x)$ we just have to observe that
$x_i \in B_r(x)$ implies that $B_{\delta/3}(x_i) \subset B_{r+\delta}(x)$. The
disjointness of the sets constituting the pavement and the uniform
volume estimate for balls of radius $r+\delta$ (independent from $x$) then
provides the required counting estimate, uniformly with respect to $x$. 
	
	In order to create a BUPU of size $\delta$ with $\supp(\psi_i) \subset B_\delta(x_i)$ we proceed as follows:
	First we observe that the selected family of balls has finite overlap.
	Moreover, starting from some function $\varphi \in \CcRd$ with
	$\varphi(z) = 1$ on $B_{2\delta/3}(0)$ and $\supp(\varphi) \subseteq B_\delta(0)$, we set $\Phi = \sum_{i\in I} T_{x_i} \varphi $ and put
	$\psi_i \equiv T_{x_i} \varphi / \Phi$. This implies $\sumiI \psi_i(x) \equiv 1$
	and
	$$ \supp(\psi_i) \subseteq \supp(T_{x_i}\varphi)
	= x_i +\supp(\varphi) \subset B_{\delta}(x_i).$$
\end{proof}
\begin{remark}
	For the Euclidian set there is an alternative method to select a well-separated
	set from a given $\delta/3$-dense family $(x_j)_{j\in J}$. Starting from
	a partition of $\Rdst$ with cubes of size $\alpha > 0$ (small enough),
	centered at the lattice $\alpha \Zdst$, one can pick one point  within
	each such cube. We leave the details to the interested reader.
\end{remark}

\section{The Main Result } 

\begin{theorem} \label{TaubKats20E}
Given a {relatively compact set} $M$ in any MINTSTA  $\BspN$ and
some  $g_0\in \ScRd$ with $\ghat_0(y) \neq 0$ for all $y \in \Rdst$,
we can show the following:

For any given $\epso$ there exists some $\delta > 0$ such that
for any $\delta$-dense set $(x_i)_{i \in I}$ one can pick a finite
subset $F \subset I$ and construct  a finite rank operator $T$ with range
in the linear span of $ S(g_0):= \{T_{x_i}g_0, i \in F \}$  with
\begin{equation}\label{Mapproxtrans}
  \Bnorm{T(f) - f} \leq \veps, \quad \forall f \in M.
\end{equation}
\end{theorem}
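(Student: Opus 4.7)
The plan is to combine the compactness characterization (Theorem \ref{compsetchar}) with a Tauberian factorization of an approximate identity through $g_0$, and then to discretize the resulting auxiliary $\Liwsp$-function on the given $\delta$-dense set via Proposition \ref{Liwmod1}. First I would invoke Theorem \ref{compsetchar} to choose $u \in \ScRd$ compactly supported and $v \in \ScRd$ with $\widehat{v} \in \DRd$ so that, after cascading the two uniform estimates and using $\Bnorm{v \ast (u \cdot f - f)} \leq \normta{v}{\Liwsp} \Bnorm{u \cdot f - f}$,
\begin{equation*}
  \Bnorm{v \ast (u \cdot f) - f} \leq \veps/3, \quad \forall f \in M.
\end{equation*}
The Tauberian step is now the key algebraic move: since $\ghat_0$ vanishes nowhere on $\Rdst$ and $\widehat{v}$ is compactly supported, the quotient $\widehat{v}/\ghat_0$ is smooth and compactly supported, hence it is the Fourier transform of some $k \in \ScRd$ with $k \ast g_0 = v$. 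Substituting yields
\begin{equation*}
  v \ast (u \cdot f) = g_0 \ast h_f, \qquad h_f := k \ast (u \cdot f),
\end{equation*}
and reduces the task to approximating $g_0 \ast h_f$ by a finite linear combination of translates $T_{x_i} g_0$.

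The main technical point is a uniform rapid-decay estimate for $h_f$. Using the pairing $h_f(x) = \langle f,\, u \cdot T_x k\chck \rangle$, the fixed compact support of $u$ together with $k\chck \in \ScRd$ forces every $\ScRd$-seminorm of $u \cdot T_x k\chck$ to decay faster than any polynomial in $|x|$; combined with $\ScRd \hkr \BPsp$ this gives
\begin{equation*}
  |h_f(x)| \leq C_N\,(1+|x|)^{-N}\,\Bnorm{f}, \quad \forall N \in \Nst,
\end{equation*}
and hence $\sup_{f \in M} \normta{h_f}{\Liwsp} < \infty$. Given the $\delta$-dense family $(x_i)_{i \in I}$, Lemma \ref{exBUPU01} extracts a well-spread subfamily $(x_i)_{i \in I'}$ carrying a BUPU $\Psi$ of size below the threshold $\delta_0$ produced by Proposition \ref{Liwmod1} together with Remark \ref{Prop3unif} (which is precisely what fixes the admissible value of $\delta$). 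Proposition \ref{Liwmod1} then delivers
\begin{equation*}
  \Bnorm{g_0 \ast h_f - g_0 \ast \DPsi h_f} \leq \veps/3 \quad \mbox{uniformly in } f \in M,
\end{equation*}
and a direct unwrapping yields $g_0 \ast \DPsi h_f = \sum_{i \in I'} c_i(f)\, T_{x_i} g_0$ with continuous linear functionals $c_i(f) = \langle f,\, u \cdot (k\chck \ast \psi_i)\rangle$.

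To finish, I would truncate the sum. Combining the uniform rapid decay of $h_f$, the coefficient estimate in Proposition \ref{discrconvBeurl1}, and the polynomial growth of $\Bnorm{T_{x_i} g_0}$ from the translation axiom of a MINTSTA, the series $\sum_{i \in I'} |c_i(f)|\,\Bnorm{T_{x_i} g_0}$ is fast convergent, uniformly on $M$. Picking $F = \{i \in I' \,:\, |x_i| \leq R\}$ with $R$ large enough makes the tail $\Bnorm{\sum_{i \notin F} c_i(f)\, T_{x_i} g_0} \leq \veps/3$ uniformly in $f \in M$, and the finite rank operator
\begin{equation*}
  Tf := \sum_{i \in F} c_i(f)\, T_{x_i} g_0
\end{equation*}
has range inside $\operatorname{span}\{T_{x_i} g_0 : i \in F\}$. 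Adding the three $\veps/3$ contributions delivers $\Bnorm{Tf - f} \leq \veps$ for every $f \in M$. The hardest step is the uniform Schwartz-type decay of $h_f$; once that is in hand, the rest is triangle-inequality bookkeeping together with the standard BUPU discretization machinery already prepared in Section 2.
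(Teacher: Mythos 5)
Your argument is correct and follows the same overall strategy as the paper's proof --- use Theorem~\ref{compsetchar} to replace $f$ by a product--convolution expression, use the Tauberian division by $\widehat{g_0}$ on a compact frequency set to rewrite that expression as $g_0 \ast h_f$ with $\{h_f : f \in M\}$ bounded in $\Liwsp$, and then discretize with a BUPU via Lemma~\ref{exBUPU01} and Proposition~\ref{Liwmod1}/Remark~\ref{Prop3unif} --- but you perform the two regularizations in the opposite order, and this changes the supporting machinery in a genuine way. The paper first smooths ($g \ast f$, factored as $g_0 \ast (g_1 \ast g \ast f)$ through a plateau function $\widehat{h_1}$) and only then localizes by multiplying with $k \in \CcRd$; this costs Lemma~\ref{tightpres1} (propagation of tightness under convolution) and Lemma~\ref{Liwbdn04}, but it buys compact support of $h = k \cdot (g_1 \ast g \ast f)$, so the index set $F = \{i : \psi_i \cdot k \neq 0\}$ is finite automatically and no truncation is needed. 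You localize first ($u \cdot f$) and smooth afterwards, so $h_f = k \ast (u \cdot f)$ is not compactly supported, and you must supply two substitutes: (a) a replacement for Lemma~\ref{Liwbdn04}, namely the uniform estimate $|h_f(x)| \leq C_N (1+|x|)^{-N} \Bnorm{f}$, and (b) a tail truncation of the resulting infinite sum. Both are sound, but (a) deserves one more line than you give it: from $h_f(x) = \langle f,\, u \cdot T_x k\chck\rangle$ you need not only that each Schwartz seminorm of $u \cdot T_x k\chck$ decays rapidly, but that the embedding $\ScRd \hkr \BPsp$ is \emph{continuous} (a closed-graph argument, not stated in the paper), so that $\normta{u \cdot T_x k\chck}{\BPsp}$ is dominated by finitely many Schwartz seminorms; step (b) then works because well-spreadness from Lemma~\ref{exBUPU01} and $\Bnorm{T_{x_i} g_0} \leq C_1 \langle x_i \rangle^{s_1} \Bnorm{g_0}$ make $\sum_i |c_i(f)|\,\Bnorm{T_{x_i}g_0}$ converge with a tail that is small uniformly on $M$. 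In exchange your route dispenses with Lemma~\ref{tightpres1} altogether and makes the Tauberian step more direct (dividing $\widehat{v}$ itself by $\widehat{g_0}$ rather than an auxiliary plateau function), at the price of a decay-plus-truncation argument that the paper's arrangement avoids.
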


\begin{remark}
We will give  the proof   for compact sets. In fact, since all the estimates allow the transition
from $M$ to the closure $\overline{M}$ of the set $M$ in $\BspN$ (and obviously the
validity for $\overline{M}$ implies the validity for $M$) this implies the general case.
\end{remark}

\begin{proof}
The proof requires a couple of steps. Let us fix $\epso$.
\begin{enumerate}
  \item First of all one has to check that $S(g_0)$ is a subset of $\Bsp$.
  In fact, we have $g_0\in \ScRd \subset \Bsp\cap \LiwRd \subset \Bsp$,
   and all the involved spaces are translation invariant.
   Thus any finite rank operator with range in a finite-dimensional
   subspace of the linear span of $S(g_0)$ will be bounded
   on $\BspN$\footnote{We  cannot expect that there is a uniform bound
   on the operators constructed below!}.
  \item 
   For the given $\epso$ we choose first (according to (\ref{equichar1})
   in Thm. \ref{compsetchar}) some band-limited function
     $g \in \ScRd \subset \Biw$    such that
    \begin{equation}\label{appr01}   
      \Bnorm {f - g \ast f  } < \veps/4, \quad \forall f \in M.
    \end{equation}
     \item
Invoking now the \tred{Tauberian condition and the smoothness of $\hatg_0$}
  we observe that for   such a band-limited   $ g \in \ScRd \subset \BiwRd \subset \LiwRd$
  with $\spec(g) :=  \supp(\hatg) = Q_1 $ (some compact
 set) we can find  another band-limited function
 $ h_1 \in \ScRd$ with compact spectrum $Q_2 = \supp(\widehat{h_1})$,
 such that $ g = g\ast h_1$ (or $\widehat{h_1}(s) = 1$ on $Q_1$).
 Since $\hatg_0(y) \neq 0$ for $y \in Q_2$  it follows
 that $1/\hatg_0(y)$ is a smooth function as well, and
 consequently the pointwise product $ \widehat{h_1}(y)/\hatg_0(y)$
 defines a compactly supported and smooth function in $\ScRd$, whose
 inverse Fourier transform can be called $g_1$.
 Thus we have
  $ \hatg_1(y) \cdot \hatg_0(y) \equiv 1$ on $Q_1$, or
 $$   g =  g \ast g_1 \ast g_0 = g_0 \ast g_1 \ast g \,\,\, \mbox{in} \,
    \, \ScRd. $$
This means that we can factorize $f \ast g$ through $g_0$, for any $f \in \Bsp$, as
\begin{equation} \label{facthg2}
   g \ast f = f \ast g  =  f \ast (g \ast g_1) \ast g_0 =  g_0 \ast (g_1 \ast g \ast f),
  \end{equation}
  in the sense of distributions, with
  $g_1 \ast g \ast f  \in  \LiwRd \ast \Bsp \subset \Bsp,$
  in view of Lemma~\ref{BeurBanId1}.
\item
Next we observe that the tightness of the set $M \subset \Bsp$
implies that also
$ g_1 \ast g \ast M$ is a tight set in $\BspN$
 (by choosing $S = \{ g_1 \ast g\}$
in Lemma \ref{tightpres1}).  Thus we can find
some compactly supported function $k \in \CcRd$ such that
\begin{equation} \label{estim06b}
 \Bnorm{ k \cdot (g_1 \ast g \ast f) - (g_1 \ast g \ast f) }
\leq \veps/{4 \normta {g_0} \Liwsp}, \quad \forall f \in M.
\end{equation}
We also note that by  Lemma \ref{Liwbdn04} (with $g_3 = g \ast g_1$)
\begin{equation}\label{estim06c}
  S_k(M):= \{ h := k \cdot (g_1 \ast g \ast f), f \in M \}
\end{equation} is a bounded subset of $\LiwRd$, so that
Proposition \ref{Liwmod1} can be invoked. Using Lemma \ref{exBUPU01} 
we can apply the discretization operator for any   BUPU (and we can use
the given well-spread family $(x_i)_{i \in I}$ for its centers,
 as long as the set is $\delta-$dense, with $\delta > 0$ {\it only depending
on} $g_0 \in \ScRd \subset \Bsp$) and will get:
\begin{equation}\label{estim07}
  \Bnorm{g_0 \ast h - g_0 \ast \DPsi h} \leq \veps/4,
   \quad \forall h \in S_k(M).  
\end{equation}
Recalling identity (\ref{facthg2}) and the estimate (\ref{appr01}) we note that
\begin{equation}\label{estim04c}
  \Bnorm{f - g_0\ast (g_1 \ast g \ast f)} =  \Bnorm{f - g \ast f}
   \leq \veps/4,   \quad f \in M,
\end{equation}
and furthermore, using the estimate (\ref{estim06b}), one obtains for $f \in M$:
\begin{equation}\label{estim04e}
 \Bnorm{g_0 \ast h - g_0 \ast (g_1 \ast g \ast f)}
 \leq   \normta{g_0} {\Liwsp} \Bnorm{h - g_1 \ast g \ast f} \leq  \veps/4.
\end{equation}

Applying the triangular equation to the last two estimates,
i.e. using
\begin{equation}\label{estim04k}
\Bnorm{f - g_0 \ast h }
\leq \Bnorm{f - g_0 \ast (g_1 \ast g \ast f)} +
   \Bnorm{g_0 \ast (g_1 \ast g \ast f - h) }
\end{equation}
we arrive at the estimate
\begin{equation}\label{estim04f}
\Bnorm{f - g_0 \ast h}  
   \leq \veps/4 + \veps/4 = \veps/2, \quad f \in M.
\end{equation}

So finally combining the estimates (\ref{estim04f}) and (\ref{estim07})
we arrive at our final estimate:
\begin{equation}\label{finalest}
  \Bnorm{f - g_0 \ast \DPsi h} \leq  \Bnorm{f - g_0 \ast h} +
   \Bnorm{g_0 \ast h - g_0 \ast \DPsi h} \leq \veps, \quad f \in M.
\end{equation}

Writing the approximation operator  $T$ in explicit form, we have
\begin{equation}\label{approxop007}
  Tf =   {g_0 \ast \DPsi h} =
  g_0 \ast \DPsi(k \cdot(g_1 \ast g \ast f)) =
    \sum_{i \in F}  c_i T_{x_i} g_0,
\end{equation}
  with the coefficients $(c_i)_{i \in F}$ depending in
  a linear way on the function $f \in \Bsp$ via
\begin{equation} \label{approxop008}
   c_i :=   \intRd h(x) \psi_i(x) dx =
   \intRd (k \cdot(g_1 \ast g \ast f))(x) \psi_i(x) dx , \quad i \in F.
\end{equation}
Here, due to the fact that the centers are well-spread and that
$\supp(k)$ is compact  the following set $F$ is a finite: 
\begin{equation}\label{finiteset07}
   F  := \{i \in I \suth  \psi_i \cdot k \neq 0\}.
\end{equation} 
\end{enumerate}
 \end{proof}

\begin{remark}  \label{mostgeneral}
A careful analysis of the proof given above shows that the assumption
$g \in \ScRd$ is convenient in the given setting, but in fact it
is only required to assume $g \in  \BiwRd = \Bsp \cap \LiwRd $.
For such a case the pointwise inversion argument based on
smoothness used in step (3) of the above has to replaced by
Wiener's inversion Theorem for Beurling algebras  which
has been already mentioned earlier (see \cite{rest00}, Chap.1.6.5).
Since in this case the possible choices of $g$ depend on $\Bsp$
(and thus is not universal with respect to the family of
Banach spaces under consideration) we have chosen to present
our results in the form of Theorem \ref{TaubKats20E}.
\end{remark}

\begin{remark} \label{maincmt}
The proof of the main theorem demonstrates a couple of facts: While ideally
one would like to replace the given functions $f \in M$ by functions which are
both band-limited (hence smooth) and compactly supported, the impossibility
of getting this done at the same time requires to carry out the corresponding
modifications of the given functions $f \in M$ stepwise, in order to come
up with the final approximation, obtained via a discrete convolution.

In order to make things work one has to make use of the fact that convolution
typically improves {\it local} properties (such as continuity or smoothness),
while preserving global decay conditions. In a similar way
pointwise multiplication by smooth, compactly supported
functions will preserve the local properties, while improving
the {\it global} ones (e.g. by creating a compactly supported function).

The structure of the proof would not really simplify if we had 
restricted our attention to the unweighted case. Even for simple
special cases such as $\Bsp = \Lpsp(\Rdst)$ the key steps would be the same.
A crucial property used is the uniformity of the corresponding 
approximation arguments over (relatively) compact sets $M$ in $\BspN$.
\end{remark}

\section{Applications}

Although we have indicated that there is room for further generalization
we have to point out that the list of examples to which the above result
applies is   endless. The first author has tried to collect the construction
principles widely (or occasionally) used in Fourier Analysis in a systematic
way. It appears to be harder to identify space which are generally important,
containing $\ScRd$ as a dense subspace, but not satisfying the assumptions
formulated for this paper.

In order to mention at least some of the most important cases which are
playing a major role in the literature let us mention:
\begin{enumerate}
  \item Weighted $\Lpsp$-spaces, for $1 \leq p < \infty$ with
    polynomial weights (see \cite{fe79},\cite{gr07});
   \item Wiener amalgam spaces of the form $\Wsp(\Lpsp,\ell^1_{v_s})$,
  for $1 \leq p,q < \infty$ (see \cite{he03}), but also
  weighted Wiener amalgam spaces, as treated  in \cite{grheok02};
  \item The 
  Besov-Triebel-Lizorkin spaces $\BspqRdN$ resp.\
  $\FspqRdN$, for $1 \leq p,q < \infty$ (see \cite{pe76} or the
  books of H.~Triebel \cite{tr83});
  \item Modulation spaces in general (see \cite{fe03-1}), but
  specifically the classical modulation spaces  $\MspqRdN$, also
  for the case   $1 \leq p,q < \infty$. See \cite{beok20} and
  \cite{coro20};
  \item More general decomposition spaces, as discussed in the
  literature, based on the approach given in \cite{fegr85};
  \item Tauberian Theorems have been derived directly in the context
  of functions of bounded means in \cite{fe88}, where they have
  been a crucial step in order to extend Wiener's Third Tauberian
  Theorem to the full range $1 <  p \leq \infty$,
  and for $\Rdst$ instead of just $p=2$ and $d=1$  (see \cite{wi32,wi33});
  \item The atomic space $\Hilb(q,p,\alpha)$ appearing in   \cite{fefe19}
  as well as many other atomic spaces, including the exotic case discussed
  in \cite{fezi02}, are  {MINTSTAs};  
  \item General construction principles for a further variety of function
  spaces as well as a long list of references are provided in \cite{fe15}.
\end{enumerate}

The fact that the Schwartz space can be characterized as the intersection
of modulation spaces (see \cite{gr01}, Proposition 11.3.1) implies that
the arguments used for the proof of our main result also provide a constructive
way to verify that the set of translates of $g$ generates a dense
subspace of the Schwartz space $\ScRd$.
\begin{corollary} \label{densScrd1}
Assume that $g \in \ScRd$ satisfies $\hatg(y) \neq 0$ for all $y \in \Rdst$.
Then for any finite set $M \subset \ScRd$ there is a sequence of  finite rank
operators $T_n$ with range in the linear span of
$ S(g_0):= \{T_{x_i}g_0, i \in F \}$, such that
$ T_n(f) \to f \,\, \mbox{in} \,\,\,\ScRd \quad \mbox{for} \,\,
\, n \to \infty$, for each $f \in M$.
In particular, this linear span is dense in $\ScRd$.
\end{corollary}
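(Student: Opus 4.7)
The plan is to reduce the statement about $\ScRd$ to a countable family of Banach-space statements, each falling directly under Theorem \ref{TaubKats20E}. First, I would invoke the characterization of the Schwartz space as a countable projective limit of weighted modulation spaces (see \cite{gr01}, Proposition 11.3.1), namely
\[
\ScRd \,=\, \bigcap_{s \in \Nst} \Msp^1_{v_s}(\Rdst),
\]
with the natural Fréchet topology of $\ScRd$ equivalent to the projective limit topology generated by the countable family of norms $(\|\ebbes\|_{\Msp^1_{v_s}})_{s \in \Nst}$. Each $\Msp^1_{v_s}(\Rdst)$ is itself a MINTSTA (translation and modulation invariant, with polynomial control governed by $v_s$, and $\ScRd$ is dense in it), so Theorem \ref{TaubKats20E} applies in each of these spaces.

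Next, since any finite set $M \subset \ScRd$ is trivially compact inside every $\Msp^1_{v_s}(\Rdst)$, I would apply Theorem \ref{TaubKats20E} to $\Bsp = \Msp^1_{v_n}(\Rdst)$ with the given $g_0$ and tolerance $\veps_n = 1/n$, obtaining for each $n \in \Nst$ a finite-rank operator $T_n$ whose range lies in the linear span of finitely many translates of $g_0$ and which satisfies
\[
\|T_n f - f\|_{\Msp^1_{v_n}} \leq 1/n, \quad \forall f \in M.
\]
Because $v_n(z) \geq v_k(z)$ pointwise whenever $n \geq k$, the continuous inclusions $\Msp^1_{v_n}(\Rdst) \hkr \Msp^1_{v_k}(\Rdst)$ hold with operator norm at most one, hence for every fixed $k \in \Nst$ and every $n \geq k$,
\[
\|T_n f - f\|_{\Msp^1_{v_k}} \leq \|T_n f - f\|_{\Msp^1_{v_n}} \leq 1/n.
\]
This yields $T_n f \to f$ in each seminorm defining the Schwartz topology, i.e.\ $T_n f \to f$ in $\ScRd$, for every $f \in M$. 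Density of the linear span of $\{T_x g_0 : x \in \Rdst\}$ in $\ScRd$ then follows by specializing to the singleton $M = \{f\}$ for an arbitrary $f \in \ScRd$.

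I do not anticipate a serious obstacle here: the real work has already been carried out inside Theorem \ref{TaubKats20E}, and the remaining argument is essentially bookkeeping in the projective-limit formalism. The only mildly delicate point is the monotonicity of the modulation-space scale $\Msp^1_{v_s}$ in the weight parameter $s$, together with the fact (recalled from \cite{gr01}) that this countable scale of norms indeed generates the Schwartz topology; both are classical, but they constitute the precise bridge that allows a single sequence $(T_n)$ to achieve convergence simultaneously in every Schwartz seminorm.
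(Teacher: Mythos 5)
Your proposal is correct and follows essentially the same route as the paper: identify $\ScRd$ with the intersection of a countable scale of weighted modulation spaces generating its Fr\'echet topology, apply Theorem \ref{TaubKats20E} in the $n$-th space with tolerance $1/n$, and use the monotone embeddings of the scale to get convergence in every seminorm. The only (cosmetic) difference is that you work with $\Msp^1_{v_s}(\Rdst)$, which are MINTSTAs outright, whereas the paper uses $\Msp^\infty_{v_s}(\Rdst)$ and must pass to the closure of $\ScRd$ therein to secure minimality; your choice is, if anything, slightly cleaner.
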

\begin{proof}
We only have to observe that the family of norms for the spaces
$\Msp^\infty_{v_s}(\Rdst)$, for $s \geq 0$ define a topology which is equivalent
to the usual topology on $\ScRd$ (see also \cite{grzi04}).
 Also, $\ScRd$ is not only
the intersection of all these spaces, but one can also
replace them by the closure of $\ScRd$ in each of these space
(for each fixed $s \geq 0$). Since such spaces are typical
examples for the setting of our main result we can guarantee
convergence of a suitable sequence of finite rank operators
for each of these norms, even uniformly with respect to compact
subsets of such a space.
\end{proof}

The extension of the statement to relatively compact subsets of $\ScRd$
is just a matter of technical arguments which are beyond the focus of
our paper. Also, it is true that one can find easier, non-constructive
arguments for the last statement, making use of standard Fourier
transform methods for the space of tempered distributions.


\section{Acknowledgement}
{This work was initiated during the visit of the
second author to the NuHAG work-group at the University of Vienna,
supported by an Ernst Mach Grant-Worldwide Fellowship (ICM-2019-13302)
from the OeAD-GmbH, Austria. The second author is very grateful to
professor Hans G. Feichtinger for his guidance, for the kind hospitality and
arranging excellent research facilities at the University of Vienna.
She was supported by NBHM-DAE (0204/19/2019R\&D-II/10472), India and the Austrian Science Fund (FWF) project  TAI6.}

\tred{The authors are grateful to the reviewer of an earlier version
of the manuscript for a very critical
reading of this manuscript which finally led to significant
improvement of the main result and a widened scope. One of the
significant changes undertaken now is the modification from
the approximation of individual functions to compact
sets $M$ in $\BspN$, using finite rank operators. 

\vspace{3mm}

\bibliographystyle{abbrv}  


\end{document}